\documentclass[12pt,leqno]{amsart}
\usepackage{graphicx}
\usepackage{epsfig,lpic,wrapfig}
\usepackage{amssymb,hyperref}
\usepackage{color,comment}

\numberwithin{equation}{section}

\newtheorem{thm}{Theorem}[section]

\newtheorem{lem}[thm]{Lemma}
\newtheorem{cor}[thm]{Corollary}
\newtheorem{prop}[thm]{Proposition}

\newtheorem{quest}[thm]{Problem}

\newtheorem{defn}[thm]{Definition}
\theoremstyle{remark}
\newtheorem{rem}[thm]{Remark}
\newtheorem{ex}[thm]{Example}

\newcommand{\tref}[1]{Theorem~\ref{#1}}
\newcommand{\cref}[1]{Korollar~\ref{#1}}

\newcommand{\CAT}{\mathrm{CAT}}

\newcommand{\R}{\mathbb{R}}

\pagestyle{plain}

\begin{document}
	\pagebreak
	
	
	\title{Weak topology on CAT(0) spaces}

	
	\author{Alexander Lytchak}

\author{Anton Petrunin}	
	
	\keywords
	{Hadamard space, weak convergence, convex subsets} 
	\subjclass
	[2010]{53C20, 53C21, 53C23}

	

	\begin{abstract}
	We analyze weak convergence on $\CAT(0)$ spaces and the existence and properties of corresponding weak topologies.
	\end{abstract}



	\maketitle

\section{Introduction}
Weak convergence and coarse topologies in $\CAT(0)$ spaces have appeared in relation to very different problems and settings in the last years, see \cite{Jost,Monod,Kirk,Bac3,Kell,Streets,Darvas, Gigli} and the survey \cite{Bac} for an overview.
On the other hand, some related fundamental questions have remained open.
This note aims to close some of these gaps. 

\begin{defn}
A bounded sequence $(x_n)$ in a $\CAT(0)$ space $X$ \emph{converges weakly} to a point $x$ if for any compact geodesic $c$ starting at $x$, the closest-point projections $Proj _c (x_n)$ of $x_n$ to $c$ converge to $x$. 
\end{defn}

This notion of convergence (also known as $\Delta$-convergence), introduced in \cite{Jost}, generalizes weak convergence in Hilbert spaces. It can be defined in many other natural ways and is suitable for questions concerning the existence of fixed points and gradient flows, see \cite{Bac}. The weak convergence generalizes verbatim to convergence of nets and satisfies natural compactness and separation properties.

We begin by resolving the question asked by William Kirk and Bancha Panyanak in \cite[Question 1]{Kirk} and discussed, for instance, in \cite{Bac4,Bac,Kell,Obs}.
The question concerns the existence of a weak topology inducing the weak convergence.
Somewhat surprisingly, the answer is different for sequences and for general nets.
In the case of sequences, the answer is always affirmative and the proof is general nonsense, not involving geometry:

\begin{thm} \label{thm1}
Let $X$ be a $\CAT(0)$ space. There exists a unique topology $\mathcal T_{\Delta }$ on $X$ with the following two properties:
\begin{itemize}

\item 	A sequence $(x_n)$ converges in $X$ with respect to $\mathcal T_{\Delta}$ to a point $x$ if and only if the sequence is bounded and converges to $x$ weakly. 

\item The topology $\mathcal T_{\Delta }$ is sequential.

\end{itemize}
\end{thm}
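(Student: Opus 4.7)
First, uniqueness is formal: if $\mathcal T'$ is any sequential topology whose convergent sequences coincide (as pairs of sequence and limit) with the bounded weakly convergent ones, then $\mathcal T'$ and $\mathcal T_\Delta$ have the same sequentially closed sets, and since both topologies are sequential this forces their closed sets to agree, so $\mathcal T' = \mathcal T_\Delta$.

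For existence, I would declare $C \subseteq X$ closed iff $C$ contains the weak limit of every bounded weakly convergent sequence it contains. The resulting family is easily verified to be closed under arbitrary intersections and finite unions (for a finite union, a weakly convergent sequence in $C_1 \cup C_2$ has, after passing to a subsequence, all its terms in one of the two, and this subsequence still weakly converges to the same limit); call the resulting topology $\mathcal T_\Delta$. The implication ``bounded weakly convergent $\Rightarrow$ $\mathcal T_\Delta$-convergent'' is immediate from the definition of open sets. Sequentiality of $\mathcal T_\Delta$ then follows: any $\mathcal T_\Delta$-sequentially closed set is in particular weakly sequentially closed, hence closed by construction.

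The substantive part is the converse: if $x_n \to x$ in $\mathcal T_\Delta$, then $(x_n)$ is bounded and weakly converges to $x$. Boundedness follows because an unbounded subsequence $(x_{n_k})$ with $d(x_{n_k}, x) \to \infty$ yields a weakly sequentially closed set $\{x_{n_k} : k\in \mathbb N\}$---any bounded weakly convergent sequence inside is forced to be eventually constant---whose complement is a $\mathcal T_\Delta$-open neighbourhood of $x$ never entered by $(x_{n_k})$, contradiction. For the weak convergence itself, I plan to invoke a Urysohn-type principle: a bounded sequence weakly converges to $x$ iff every subsequence has a subsubsequence weakly converging to $x$. This combines $\Delta$-compactness (bounded sequences in a $\CAT(0)$ space admit weakly convergent subsequences) with the projection definition of weak convergence applied subsequence by subsequence.

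The main obstacle is then to close the loop. Given an arbitrary subsequence of $(x_n)$, $\Delta$-compactness produces a subsubsequence weakly converging to some $y$, and we must show $y = x$. Since that subsubsequence $\mathcal T_\Delta$-converges both to $x$ (as a subsequence of the original) and to $y$ (by the easy direction already proved), this amounts to showing that $\mathcal T_\Delta$-sequential limits are unique, equivalently, that $\mathcal T_\Delta$ is Hausdorff. I would establish Hausdorffness by separating $x\ne y$ with open sets $\{z:d(\mathrm{Proj}_c(z),x)<d(x,y)/3\}$ and $\{z:d(\mathrm{Proj}_c(z),y)<d(x,y)/3\}$, where $c$ is the geodesic joining $x$ to $y$. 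The key geometric lemma needed for these sets to be $\mathcal T_\Delta$-open is that closest-point projection onto a compact geodesic in a $\CAT(0)$ space is continuous under weak convergence; this can be established via the characterization of weak limits as unique asymptotic centers of subsequences, combined with the commutation of asymptotic centers with projection onto closed convex subsets.
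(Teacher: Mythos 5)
Your construction of $\mathcal T_{\Delta}$, the verification that it is a topology, the easy direction, the boundedness argument, the sequentiality and the uniqueness argument all match the paper's proof. The genuine gap is in the final step, where you reduce the remaining implication to the claim that $\mathcal T_{\Delta}$ is Hausdorff and propose to prove Hausdorffness by separating $x\neq y$ with the sets $\{z: d(\mathrm{Proj}_c(z),x)<d(x,y)/3\}$. This cannot work: Proposition \ref{prop: haus} of the paper exhibits a bounded two-dimensional $\CAT(0)$ complex on which $\mathcal T_{\Delta}$ is \emph{not} Hausdorff, so no argument can establish Hausdorffness in general. Correspondingly, your ``key geometric lemma'' -- that nearest-point projection onto an arbitrary compact geodesic is continuous under weak convergence -- is false in general $\CAT(0)$ spaces; the definition of weak convergence only controls projections onto geodesics emanating from the weak limit itself, and if projections onto arbitrary geodesics were weakly continuous, your separating sets would indeed be open and $\mathcal T_{\Delta}$ would always be Hausdorff, contradicting that example. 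There is also a logical slip: uniqueness of $\mathcal T_{\Delta}$-limits of \emph{sequences} is sequential Hausdorffness, which is strictly weaker than Hausdorffness, so the ``equivalently'' is not justified.

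What you actually need is much less, and it is already among the standard facts the paper quotes: a bounded sequence has at most one weak limit. Given that, the paper closes the loop without any separation axiom for $\mathcal T_{\Delta}$: if $(x_n)$ is $\mathcal T_{\Delta}$-convergent to $x$ but not weakly convergent to $x$, pass to a subsequence $(x_{m_n})$ weakly converging to some $y\neq x$ and, after discarding the finitely many terms equal to $x$, observe that the set $A=\{x_{m_n}\}\cup\{y\}$ is $\mathcal T_{\Delta}$-closed (any bounded weakly convergent sequence in $A$ either has a constant subsequence, whose value is its weak limit, or a subsequence of pairwise distinct $x_{m_n}$, whose weak limit must be $y$ by uniqueness of weak limits). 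Then $X\setminus A$ is a $\mathcal T_{\Delta}$-open neighborhood of $x$ avoided by the subsequence, contradicting $\mathcal T_{\Delta}$-convergence to $x$. Replacing your Hausdorffness step by this argument (or by any argument using only sequential uniqueness of weak limits) repairs the proof.
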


Recall, that a topology $\mathcal T$ is called \emph{sequential} if a subset is $\mathcal T$-closed whenever it contains any $\mathcal T$-limit point of any sequence of its elements. 

This topology $\mathcal T_{\Delta}$, which we want to call the \emph{weak topology}, has the following additional properties;
see Proposition \ref{prop: delta}, Corollary \ref{prop: comp}: 
$\mathcal T _{\Delta}$ is sequentially Hausdorff; any metrically closed, bounded,  convex subset of $X$ is $\mathcal T_{\Delta}$-closed, 
	$\mathcal T_{\Delta}$-sequentially compact and $\mathcal T_{\Delta}$-compact. However,

\begin{prop} \label{prop: haus}
There exists a bounded, separable, two-dimensional $\CAT(0)$ simplicial complex $X$
such that $\mathcal T_{\Delta}$ is not Hausdorff.
\end{prop}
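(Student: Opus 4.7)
The plan is to produce an explicit bounded separable $2$-dimensional $\CAT(0)$ simplicial complex $X$ together with two distinct points $x,y\in X$ such that every $\mathcal T_{\Delta}$-open neighbourhood of $x$ has $y$ in its $\mathcal T_{\Delta}$-closure, and symmetrically; this is equivalent to the failure of Hausdorffness. Since $\mathcal T_{\Delta}$ is sequential (\tref{thm1}) and sequentially Hausdorff, the $\mathcal T_{\Delta}$-closure of a set must be computed by iterating the weak-sequential-closure operator transfinitely, and the example must exploit precisely the gap between one-step sequential closure and its transfinite iteration -- a phenomenon that is already known to distinguish non-Hausdorff $T_1$ sequential topologies from Hausdorff ones.

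For the construction, I would take a bounded ``tree of books'': start with a single flat Euclidean triangle; along every edge of every triangle at level $k$ attach countably many flat triangles as pages of a book, each of diameter half the parent's, and iterate. The resulting $2$-complex is simply connected and has finite diameter, and each link is easily seen to be $\CAT(1)$ by a girth-$\geq 2\pi$ argument (the link of an interior edge point is a graph with two vertices joined by countably many arcs of length $\pi$, and the vertex links are bouquets of such graphs). Consequently $X$ is $\CAT(0)$, separable, and $2$-dimensional. The two points $x$ and $y$ are chosen to be distinct ``ideal'' limits of two symmetrically placed branching threads at infinite depth; the construction is arranged so as to admit an isometric involution of $X$ exchanging $x$ and $y$.

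The technical heart is the claim: for every weak-sequentially-closed subset $A\subset X$ with $x\notin A$, the open set $X\setminus A$ contains a sequence weakly convergent to $y$. Granting this, $y$ lies in the $\mathcal T_{\Delta}$-closure of $X\setminus A$ for every sequentially closed $A$ avoiding $x$, so $x$ and $y$ cannot be separated by disjoint $\mathcal T_{\Delta}$-opens. One proves the claim by transfinite induction on the level of the branching tree, using the explicit description of weak convergence in a single book: the projection of an apex of one page onto a geodesic in a different page collapses through the foot of the perpendicular onto the common spine, so weak-sequential limits of apex-sequences always concentrate on spines of ancestor books.

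The main obstacle will be this transfinite induction and the combinatorial bookkeeping of which triangle apices survive in a given iterated weak-sequential closure. At each ordinal stage one must show that a sequentially closed set avoiding $x$ retains, at some level, an infinite subfamily of apices along a branch whose terminal spine leads to $y$, so that a diagonal sequence can be extracted with weak limit $y$. The delicate point is verifying that no sequentially closed set can simultaneously cut off $x$ and block every sequence weakly tending to $y$; this rigidity is where the symmetric placement of $x$ and $y$ and the specific geometry of projections in the book pattern are essential, and it is the step where the concrete calculation will have to be done in detail.
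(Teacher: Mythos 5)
Your proposal has the right logical skeleton (to defeat Hausdorffness it suffices to exhibit two points $x\neq y$ such that for every $\mathcal T_{\Delta}$-closed $A$ with $x\notin A$ the complement $X\setminus A$ contains a sequence weakly converging to $y$), and your heuristic that the example must exploit the failure of the one-step sequential closure to be the full closure is exactly the right one (compare the Frechet--Urysohn remark at the end of Section \ref{sec: funct}). But as written there is a genuine gap: the decisive step is never carried out. You yourself flag that the ``rigidity'' claim --- that no weakly sequentially closed set can cut off $x$ while blocking every sequence weakly tending to $y$ --- ``will have to be done in detail,'' and that claim is the entire content of the proposition; everything before it is setup. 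Moreover, the construction on which the deferred argument is supposed to run is under-specified in ways that matter. Attaching pages of ``diameter half the parent's'' along a whole edge of the parent is not a consistent gluing scheme as described; more seriously, your points $x,y$ are ``ideal limits at infinite depth,'' i.e.\ they lie in the completion but in no simplex, so the resulting space is not a two-dimensional simplicial complex as the statement requires, and analyzing \emph{all} weakly sequentially closed sets avoiding such an ideal point in an infinite-depth complex (your proposed transfinite induction over levels) is precisely the hard bookkeeping you have not done and for which no mechanism is given.

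For comparison, the paper's example shows that no infinite-depth construction or transfinite induction is needed: one takes the Euclidean cone over a tree $Y$ with just two generations of countable branching (center $p$, branch points $B$ at distance $\tfrac\pi4$, endpoints $E$ at distance $\tfrac\pi2$), truncated to a bounded $2$-complex $X$. Two explicit weak-limit cascades are computed: an injective sequence in $E$ meeting infinitely many branches has pairwise distance $\pi$ in $Y$ and converges weakly to the cone point $o$, while the endpoints over a fixed branch point $b$ converge weakly to $\tfrac1{\sqrt2}\cdot b$, and the points $\tfrac1{\sqrt2}\cdot b_n$ in turn converge weakly to $\tfrac12\cdot p$. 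Then a finite pigeonhole argument applied to any closed cover $C_1\cup C_2=X$ with $o\notin C_1$, $\tfrac12\cdot p\notin C_2$ yields a contradiction: $C_1$ can contain endpoints from only finitely many branches, so $C_2$ absorbs $\tfrac1{\sqrt2}\cdot b$ for infinitely many $b$ and hence $\tfrac12\cdot p$. If you want to salvage your approach, the lesson is that a two-step cascade located at honest points of a genuine simplicial complex suffices, and the verification should be a concrete covering/pigeonhole argument rather than a transfinite induction over an infinitely deep tree of books.
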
 

Together with Proposition \ref{prop: haus}, the next theorem implies that, in general, there is no topology on a $\CAT(0)$ space which induces the weak convergence of \emph{nets}:

\begin{thm} \label{thm2}
Let $X$ be a $\CAT(0)$ space and let $\mathcal T_{\Delta}$ be the weak topology 	defined in Theorem \ref{thm1}. 
For a topology $\mathcal T$ on $X$ the following two conditions are equivalent:

\begin{itemize}

\item A bounded net $(x_{\alpha})$ converges to a point $x\in X$ weakly if and only if $(x_{\alpha})$ converges to $x$ with respect to $\mathcal T$.

\item The restriction of $\mathcal T$ to any closed ball in $X$ is Hausdorff and coincides with $\mathcal T_{\Delta}$. 

\end{itemize}
\end{thm}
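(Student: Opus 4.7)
My plan is to prove the equivalence $(1) \Leftrightarrow (2)$ directly, relying on four inputs: closed balls in $X$ are $\mathcal T_\Delta$-compact (Corollary~\ref{prop: comp}); the sequentiality of $\mathcal T_\Delta$ (Theorem~\ref{thm1}) descends to any $\mathcal T_\Delta$-closed subspace, in particular to any closed ball $B$; weak limits of bounded nets in a $\CAT(0)$ space are unique; and closest-point projections onto compact geodesics are weakly sequentially continuous. The last two items are standard $\CAT(0)$ facts, established either earlier in the paper or in the cited literature.

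For the direction $(1) \Rightarrow (2)$, I fix a closed ball $B$ and observe that uniqueness of weak net limits together with (1) forces $\mathcal T|_B$-net limits to be unique, which yields the Hausdorff property for $\mathcal T|_B$. To identify $\mathcal T|_B$ with $\mathcal T_\Delta|_B$, I note that both topologies have the same convergent sequences, namely the weakly convergent ones; since $\mathcal T_\Delta|_B$ is sequential, any $\mathcal T|_B$-closed subset is closed under sequential weak limits and is therefore $\mathcal T_\Delta|_B$-closed, so $\mathcal T_\Delta|_B$ refines $\mathcal T|_B$. As $B$ is $\mathcal T_\Delta$-compact and $\mathcal T|_B$ is Hausdorff, the continuous identity map $(B, \mathcal T_\Delta|_B) \to (B, \mathcal T|_B)$ must then be a homeomorphism.

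For $(2) \Rightarrow (1)$, I take a bounded net $(x_\alpha) \subseteq B$ and a candidate limit $x$, noting first that $x \in B$ in either sense (closed convex sets are weakly closed under nets in $\CAT(0)$, while $\mathcal T$-limits lie in $B$ by applying $\mathcal T|_{B'} = \mathcal T_\Delta|_{B'}$ on a larger ball $B' \supseteq B$ and invoking $\mathcal T_\Delta$-closedness of $B$). To prove that $\mathcal T$-convergence implies weak convergence, I use that for any compact geodesic $c$ starting at $x$ the projection $\mathrm{Proj}_c \colon X \to c$ is weakly sequentially continuous; since $\mathcal T_\Delta$ is sequential, this sequential continuity upgrades to topological $\mathcal T_\Delta$-continuity, hence to $\mathcal T|_B$-continuity on $B$, and preserves net convergence, giving $\mathrm{Proj}_c(x_\alpha) \to x$. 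For the reverse direction, I argue by contradiction: if a weakly convergent net $(x_\alpha) \to x$ fails to $\mathcal T$-converge to $x$, some $\mathcal T$-open $U \ni x$ is missed cofinally, producing a subnet in $B \setminus U$; by compactness of $\mathcal T|_B$ I extract a further $\mathcal T$-convergent sub-subnet with limit $y \in B \setminus U$; by the direction just proven this sub-subnet weakly converges to $y$, while as a subnet of $(x_\alpha)$ it weakly converges to $x$; uniqueness of weak net limits then forces $x = y$, contradicting $y \in B \setminus U$.

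The main technical subtlety is the passage from sequences to nets, and the Hausdorff hypothesis on each $\mathcal T|_B$ is precisely what makes this passage possible. Without Hausdorffness, nets may $\mathcal T_\Delta$-converge to several points (as witnessed by Proposition~\ref{prop: haus}), whereas weak limits of bounded nets are always unique, ruling out any topology satisfying (1) in that situation. The sequentiality of $\mathcal T_\Delta$ is invoked twice: once to show that $\mathcal T_\Delta|_B$ refines $\mathcal T|_B$ in $(1) \Rightarrow (2)$, and once again to promote weak sequential continuity of projections into genuine topological continuity in $(2) \Rightarrow (1)$.
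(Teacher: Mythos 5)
Your direction $(1)\Rightarrow(2)$ is fine (and a nice variant of the paper's argument: you get $\mathcal T|_B=\mathcal T_{\Delta}|_B$ from the continuous identity map of the compact space $(B,\mathcal T_{\Delta}|_B)$ onto the Hausdorff space $(B,\mathcal T|_B)$, with the refinement coming from sequentiality of $\mathcal T_{\Delta}|_B$; note this still runs on Corollary~\ref{prop: comp}, i.e.\ on Proposition~\ref{prop: eber}). The direction $(2)\Rightarrow(1)$, however, has a genuine gap: the input you call a standard fact --- that the closest-point projection $\mathrm{Proj}_c$ onto a fixed compact geodesic $c$ is weakly sequentially continuous --- is neither proved in the paper nor true in general $\CAT(0)$ spaces. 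Take $X$ to be countably many Euclidean half-planes $P_0,P_1,P_2,\dots=\{(s,u):u\ge 0\}$ glued along their boundary line $L=\{u=0\}$ (a $\CAT(0)$ ``open book''). Let $x_n\in P_n$ be the point with coordinates $(1,1)$, $n\ge 1$. Every geodesic issuing from $y=(1,0)\in L$ lies in a single page, and for all but at most one $n$ the projection of $x_n$ to it equals $y$; hence $x_n$ converges weakly to $y$. Now let $c\subset P_0$ be the segment from $(0,1)$ to $(2,2)$. Distances from $x_n$ ($n\ge1$) to points of $c$ are computed in the flat plane $P_n\cup P_0$, where $x_n$ unfolds to $(1,-1)$, giving $\mathrm{Proj}_c(x_n)=(0,1)$ for every $n$, whereas $\mathrm{Proj}_c(y)=(0.4,\,1.2)$. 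So $\mathrm{Proj}_c$ is not weakly sequentially continuous; weak convergence only controls projections onto geodesics through the limit point itself, and continuity at the single point $x$ is not something sequentiality can upgrade to continuity for nets.

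This defect is not cosmetic: your proof that $\mathcal T$-convergence of a bounded net implies weak convergence rests entirely on that continuity claim, and your converse half explicitly invokes ``the direction just proven'' for the extracted sub-subnet, so all of $(2)\Rightarrow(1)$ is unsupported. The missing bridge from sequences to nets is exactly what the paper supplies with Proposition~\ref{prop: eber} (from any weakly convergent bounded net one can extract a \emph{sequence} of its elements weakly converging to the same limit) and its consequence Lemma~\ref{lem: h}: weak convergence of a bounded net already implies $\mathcal T_{\Delta}$-convergence, with no Hausdorff hypothesis. With that in hand, the remaining implication ($\mathcal T_{\Delta}$-convergence of a bounded net implies weak convergence) follows by the soft argument you attempted, but run the other way around: if the net does not converge weakly to $x$, pass to a subnet whose projections to some geodesic through $x$ stay away from $x$, extract a weakly convergent subnet with limit $y\neq x$, apply Lemma~\ref{lem: h} to see it $\mathcal T_{\Delta}$-converges to $y$ as well as to $x$, and contradict the Hausdorff assumption on a ball. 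So your outline can be repaired, but only by reinstating the Eberlein--Smulian-type extraction (Proposition~\ref{prop: eber}/Lemma~\ref{lem: h}) in place of the false projection-continuity statement.
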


We discuss $\mathcal T_{\Delta}$ in some examples and relate this topology to another coarse topology,
the \emph{coconvex} topology introduced by Nicolas Monod in \cite{Monod}. This coconvex topology $\mathcal T_{co}$ on a $\CAT(0)$ space $X$ is defined as the coarsest topology $\mathcal T$ on $X$ for which all metrically closed, convex subsets are $\mathcal T$-closed. 

Every metrically closed, bounded convex subset of $X$ is $\mathcal T_{co}$-compact and $\mathcal T_{co}$-sequentially compact, see Section \ref{sec: conv}.
The weak topology $\mathcal T_{\Delta}$ is finer than the coconvex topology $\mathcal T_{co}$ (Proposition \ref{prop: delta}); these topologies can be different even for bounded $\CAT(0)$ spaces $X$ (Lemma \ref{lem: differ}).
The topologies $\mathcal T_{co}$ and $\mathcal T_{\Delta}$ coincide on all bounded subsets of $X$ if and only if the topology $\mathcal T_{co}$ is sequential and sequentially Hausdorff on bounded convex subsets. 
Whenever $\mathcal T_{co}$ is Hausdorff on bounded subsets, the topologies $\mathcal T_{co}$ and $\mathcal T_{\Delta}$ coincide on bounded subsets. 

Whenever the $\CAT(0)$ space $X$ is locally compact, the metric topology $\mathcal T_{metric}$ coincides with $\mathcal T_{\Delta}$.
On the other hand, for smooth 3-dimensional Riemannian $\CAT(-1)$ manifolds or symmetric spaces of higher rank, the coconvex topology $\mathcal T_{co}$ can be non-Hausdorff and not first countable, as we will observe in Section \ref{sec: conv}.
The failure of the Hausdorff property for symmetric spaces has been expected in \cite{Monod},  a first explicitly  confirmed failure
of the Hausdorff property for some $\CAT(0)$ space seems to be the example of the Euclidean cone over a Hilbert space provided by Martin Kell in \cite{Kell}.

On the other hand,
$\mathcal T_{co}$ is Hausdorff (and therefore coincides with $\mathcal T_{\Delta}$ and induces the weak convergence of nets) in some geometric cases:

\begin{prop} \label{prop: cohaus}
The topology $\mathcal T_{co}$ is Hausdorff in the following cases:
\begin{enumerate}
\item $X$ is homeomorphic to the plane.
\item $X$ is a Riemannian manifold with pinched negative curvature. 
\item $X$ is a finite-dimensional cubical complex.
\end{enumerate}
\end{prop}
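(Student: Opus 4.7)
My plan is to handle each case by producing, for any distinct $x,y\in X$, a finite family $C_{1},\dots,C_{n},D_{1},\dots,D_{m}$ of metrically closed convex subsets of $X$ with union $X$ such that $x\notin C_{i}$ and $y\notin D_{j}$ for all $i,j$. Then $U:=X\setminus\bigcup_{i}C_{i}$ and $V:=X\setminus\bigcup_{j}D_{j}$ are disjoint basic $\mathcal{T}_{co}$-open neighborhoods of $x$ and $y$, so $\mathcal{T}_{co}$ is Hausdorff. In cases (1) and (3) a two-element cover will suffice, namely a pair of closed convex half-spaces.

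For case (1), the key lemma I would prove is: every geodesic line $\gamma$ in a $\CAT(0)$ space homeomorphic to the plane separates $X$ into two closed convex half-planes $H^{\pm}$ with $H^{+}\cup H^{-}=X$ and $H^{+}\cap H^{-}=\gamma$. Topological planarity gives that $X\setminus\gamma$ has two connected components; convexity of each closed component follows from uniqueness of $\CAT(0)$ geodesics, since a geodesic between two points on the same side of $\gamma$ that crossed into the other side would meet $\gamma$ twice, and the subarc of the geodesic between the two crossings, being a geodesic of the same length as the corresponding arc of $\gamma$, would coincide with it, contradicting transversality. Given the lemma, I pick any geodesic line with $x$ and $y$ on opposite sides (e.g., the extension of a geodesic through the midpoint of $[x,y]$ transverse to $[x,y]$) and take $\{H^{+},H^{-}\}$ as the cover.

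For case (3), I would invoke the hyperplane theory of Sageev for finite-dimensional $\CAT(0)$ cube complexes: every hyperplane $H$ is a convex subcomplex, and $X\setminus H$ has two connected components whose closures are closed convex half-spaces of $X$ and together cover $X$. For distinct $x,y$, at least one hyperplane separates them (indeed, there are finitely many such hyperplanes, since $X$ is finite-dimensional and the number of separating hyperplanes is bounded by the $\ell_{1}$-combinatorial distance), and the corresponding pair of closed half-spaces provides the required cover.

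Case (2) is the most delicate; the pinching hypothesis is essential, since, as the paper notes just before the proposition, unpinched $\CAT(-1)$ Riemannian $3$-manifolds can already fail to be Hausdorff, so the lower curvature bound must genuinely enter the argument. My plan is to exploit the rich family of closed convex subsets of a pinched Hadamard manifold---metric balls, horoballs $\{b_{\xi}\le t\}$ for $\xi\in\partial_{\infty}X$, and tubular neighborhoods of convex subsets---to build a finite cover of $X$ with the required separation. Extending $[x,y]$ to a geodesic line with endpoints $\xi_{\pm}\in\partial_{\infty}X$, horoballs at $\xi_{+}$ and $\xi_{-}$ can be chosen to separate $x$ and $y$ in the axial directions, while additional convex sets (balls and tubular neighborhoods) handle the transverse directions. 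The main obstacle will be verifying that finitely many convex sets suffice to cover the transverse region: this is precisely where the two-sided curvature bound enters, via Toponogov/Rauch comparison with the space of constant curvature $-b^{2}$, giving uniform control on both the divergence and the convergence of geodesics that is absent in the unpinched case.
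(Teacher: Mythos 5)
Your case (1) is essentially the paper's argument (a complete geodesic line through two points chosen on opposite sides near the midpoint of $[x,y]$, whose two closed sides are convex and separate $x$ from $y$), and is fine up to the minor point that one should argue, as the paper does, that a geodesic joining nearby points on opposite sides of an extension of $[x,y]$ must cross $[x,y]$ between $x$ and $y$. The other two cases have gaps. In case (3), your claim that any two distinct points of a finite-dimensional cubical complex are separated by a hyperplane is false: two points in the interior of a single cube (e.g.\ of a single square) are separated by no hyperplane, and the $\ell_1$-combinatorial distance argument says nothing about such pairs. The paper repairs exactly this by first passing to a sufficiently fine cubical subdivision, so that all cubes have diameter much smaller than $d(x,y)$; only then must the geodesic $[x,y]$ cross a hyperwall, whose two closed convex sides give the separation. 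Without the subdivision step your argument does not separate nearby points.

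Case (2) is not a proof but a plan whose central step is explicitly left open. The statement you call ``the main obstacle''---that the region transverse to the axis (more precisely, the complement of a bounded set) can be covered by finitely many closed convex sets missing a prescribed ball---is precisely the hard content here; it is Anderson's convex-hull construction for pinched Hadamard manifolds (see also Borbely, Theorem 2.1), which the paper invokes as a black box: for $B=B_r(x)$ with $r>d(x,y)$ there are finitely many closed convex sets $C_i\subset X\setminus B$ whose complement $V$ is bounded. This is a genuine theorem, not something that drops out of a routine Toponogov/Rauch comparison sketch, and your horoball scheme does not obviously produce the required finite cover. Note also that the paper does not then build your two-family cover directly; instead it uses that $V$ is a bounded $\mathcal T_{co}$-open set contained in a compact ball $B'$, on which $\mathcal T_{co}$ coincides with the metric topology by Monod's Lemma 17, and intersects metric separating neighborhoods of $x$ and $y$ in $B'$ with $V$. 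So for (2) you still owe either a proof of the finite convex covering statement or a citation of Anderson's construction, plus the final separation step inside the bounded region.
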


The answer we provide to the second point above is a direct consequence of the construction of convex hulls in manifolds with pinched negative curvature due to Michael Anderson. While the main construction of \cite{Anderson} works without changes in infinite dimensions,
it seems not to be sufficient to answer another question from \cite{Monod}:

\begin{quest}
Is the coconvex topology $\mathcal T_{co}$ Hausdorff on the infinite-dimensional complex projective space $X=CH^{\infty}$? 
\end{quest}

Despite Lemma \ref{lem: differ} and the examples by Alano Ancona \cite{Ancona}, see Example \ref{ex: anc} below,
we do not know the answer to the following:

\begin{quest}
Find an example of bounded $\CAT(0)$ spaces for which $\mathcal T_{\Delta}$ is Hausdorff but different from $\mathcal T_{co}$. 
\end{quest}

A natural question is whether for the class of non-locally compact $\CAT(0)$ spaces appearing in most applications, as in \cite{Monod,Streets,Darvas,Clarke,Clarke2}, the weak topology is Hausdorff, at least when restricted to bounded subsets. Most of the examples are subsumed by or related to the example in the following question (we refer to \cite{Monod} for the definition and properties of the spaces of $L^2$-maps):

\begin{quest}
	Let $\Omega$ be a probability space and $X$ a locally compact $\CAT(0)$ space. What are the separation properties of the weak and the coconvex topologies on the space of $L^2$-maps $\mathcal L^2 (\Omega, X)$?
\end{quest}

Also, the following question seems to be very natural in view of the somewhat cumbersome formulation of Theorem \ref{thm2}:

\begin{quest}
	If the restriction of the weak topology $\mathcal T_{\Delta}$ on any bounded subset is Hausdorff, does it have to be a Hausdorff topology on $X$?
\end{quest} 

The paper arose in an attempt to better understand the behavior of convex subsets and convex hulls in $\CAT(0)$ spaces. The non-Hausdorff properties of $\mathcal T_{co}$ should be related to Gromov's question:

\begin{quest}
Is the closed convex hull of a compact subset of any $\CAT(0)$ space compact?
\end{quest}

The paper is structured as follows. In Section \ref{sec: seq} we recall some basic properties of the weak convergence and provide a rather straightforward proof of Theorem \ref{thm1}. In Section \ref{sec: ex} we provide the example verifying Proposition \ref{prop: haus}.
In Section \ref{sec: funct} we prove Theorem \ref{thm2}. The main technical point in the proof is a $\CAT(0)$-version of the theorem of Eberlein--Smulian in functional analysis, saying that a bounded subset is weakly closed if and only if it is weakly sequentially closed (Proposition \ref{prop: eber}).
In the final Section \ref{sec: conv}, we discuss the relations with the coconvex topology. 

\subsection*{Acknowledgments}
We would like to thank Tamas Darvas for explaining to us the problem of the existence of the weak topology on 
$\CAT(0)$ spaces, to Nicolas Monod for his interest and helpful exchange about convex subsets of $\CAT (0)$ spaces and to Miroslav Bačák for helpful comments on a preliminary version of the paper. 
Alexander Lytchak was partially supported by the DFG grant SPP 2026.
Anton Petrunin was partially supported by NSF grant DMS-2005279.

\section{Preliminaries} \label{sec: prel}

\subsection{CAT(0)}
We assume familiarity with the geometry of $\CAT(0)$ spaces and refer to \cite{BH}, \cite{AKP}, \cite{AKP_inv}. 
All $\CAT(0)$ spaces here are by definition complete and geodesic.

By $d(x,y)=d_X(x,y)$ we denote the distance in a metric space $X$.
By $B_r(x)$ we denote the closed ball of radius $r$ around the point $x$.

Any bounded subset $A$ in a $\CAT(0)$ space $X$ has a unique circumcenter $x\in X$ such that 
for some $r=r(A)\in \R$, the \emph{circumradius} of $A$, we have $A\subset B_r(x)$ but $A\not\subset B_r(y)$,
for any other point $y\neq x$ \cite{BH}.

\subsection{General topology}
We refer to \cite{Engel} for a detailed explanation of the notions below.

A \emph{directed set} $I$ is a partially ordered set, such that for any pair $\alpha_1, \alpha_2 \in I$ there exist
$\alpha$ with $\alpha \geq \alpha _1$ and $\alpha \geq \alpha _2$.

A net $(x_{\alpha})$ in a set $X$ is given by a map $\alpha \to x_{\alpha}$ from a directed set $I$ to $X$.
We will mostly suppress the directed set $I$ since it will not play any special role.

In a topological space $X$, a net $(x_{\alpha})$ converges to a point $x$ if for any neighborhood $U$ of $x$
there exists some $\alpha _0$ such that, for all $\alpha \geq \alpha _0$, the elements $x_{\alpha}$ are contained in $U$.

In a topological space, convergence of nets can be used as the right generalization of convergence of sequences from the theory of metric spaces.
For instance, a topological space is Hausdorff if and only if any net can converge to at most one point in $X$.
A topological space is compact if and only if any net in $X$ has a converging subnet.
The closure of a subset $A\subset X$ consists of all limit points $x$ of all nets $(x_{\alpha})$ with $x_{\alpha} \in A$. 

Replacing in the above statement general nets by sequences, we obtain the following properties of spaces which will appear below.

A topological space $X$ is called \emph{sequentially Hausdorff} if
any sequence in $X$ has at most one limit point. Any Hausdorff space is sequentially Hausdorff but the opposite does not hold.

A topological space $X$ is called \emph{sequentially compact} if any sequence in $X$ has a convergent subsequence. A compact space does not need to be sequentially compact and a sequentially compact space does not need to be compact.

\subsection{Basics on weak convergence}
Let $X$ be a $\CAT(0)$ space.
We stick to the definition of weak convergence given in the introduction and refer to \cite{Bac} for other descriptions and for the explanations and references of the following properties frequently used below:

Any bounded net in $X$ has at most one weak limit point in $X$.
Any subnet of a weakly converging net converges weakly to the same limit point. 
Any bounded sequence (net) has a weakly converging subsequence (subnet).

\section{Weak convergence of sequences} \label{sec: seq}

In this section we provide the rather straightforward:
\begin{proof}[Proof of Theorem \ref{thm1}]
Define the topology $\mathcal T_{\Delta}$ as follows. We say that a subset $A\subset X$ is $\mathcal T_{\Delta}$-closed, if, for any bounded sequence $x_n \in A$ weakly converging to a point $x\in X$,
we have $x\in A$. 

By definition, the empty set and the whole set are $\mathcal T_{\Delta}$-closed. Moreover,
any intersection of $\mathcal T_{\Delta}$-closed subsets $A_{\alpha}$ is 
$\mathcal T_{\Delta}$-closed. 

Finally, if $A_1,...,A_m$ are $\mathcal T_{\Delta}$-closed and $(x_n)$ is a bounded sequence in $A_1\cup.... \cup A_m$ weakly converging to $x$, then we find a subsequence of $(x_n)$ contained in one of the $A_i$. This subsequence also weakly converges to $x$, therefore $x\in A_i$. Hence $A_1\cup...\cup A_m$ is $\mathcal T_{\Delta}$-closed.

Altogether, this shows that the family of all $\mathcal T_{\Delta}$-closed sets is the family of closed sets of a topology, which we will denote by $\mathcal T_{\Delta}$.

We claim that a sequence $(x_n)$ in $X$ converges to a point $x$ with respect to $\mathcal T_{\Delta}$ if and only if $(x_n)$ is bounded and converges to $x$ weakly.

Firstly, let $(x_n)$ be bounded and weakly converge to $x$. If $(x_n)$ does not $\mathcal T _{\Delta}$-converge to $x$, we would find a $\mathcal T_{\Delta}$-open subset $U$ containing $x$ and a subsequence $(x_{m_n})$ contained in the complement $A\mathrel{:=}X\setminus U$.
However, $(x_{m_n})$
also converges to $x$ weakly, hence, by the definition of $\mathcal T_{\Delta}$-closed subsets, we infer $x\in A$, a contradiction.

On the other hand, let a sequence $(x_n)$ converge in the $\mathcal T_{\Delta}$-topology to $x$. If $(x_n)$ is not bounded, we could find a subsequence $(x_{m_n})$ such that $d(x_1,x_{m_n}) \to \infty$. Then the countable set $\{x_{m_n} \}$ is $\mathcal T_{\Delta}$-closed. Hence, $(x_{m_n})$ does not $\mathcal T_{\Delta}$-converge to $x$. Therefore, $(x_n)$ must be bounded.

Assume that $x_n$ does not converge weakly to $x$. Then we find a subsequence
$(x_{m_n})$ of $(x_n)$ which converges weakly to some point $y\neq x$. Moreover, deleting finitely many elements from the sequence, we may assume that $x_{m_n}$ is not equal to $x$ for all $n$.
Then the union $A$ of all
$x_{m_n}$ and the point $y$ is $\mathcal T_{\Delta}$-closed. Thus, the complement of $A$ is a $\mathcal T_{\Delta}$-open neighborhood of $x$, which does not contain all but finitely many elements of the sequence $(x_n)$. This contradiction proves that $(x_n)$ weakly converges to $x$ and finishes the proof of the claim. 

The claim and the definition of $\mathcal T_{\Delta}$ imply that a subset $A$ of $X$ is $\mathcal T_{\Delta}$-closed if every $\mathcal T_{\Delta}$-limit $x\in X$ of a sequence of points in $A$ is contained in $A$. This means that $\mathcal T_{\Delta}$ is sequential.

We have verified the required properties of $\mathcal T_{\Delta}$. Let $\mathcal T$
be another sequential topology on $X$, for which a sequence $(x_n)$ converges 
to $x$ if and only if $(x_n)$ is bounded and weakly converges to $x$. Then, for $\mathcal T$ and $\mathcal T_{\Delta}$ the convergence of sequences coincide. Since both topologies are sequential, this implies that the properties of being closed with respect to $\mathcal T$ and $\mathcal T_{\Delta}$ coincide. Hence, $\mathcal T=\mathcal T_{\Delta}$.
\end{proof}

Basic properties of the weak topology $\mathcal T_{\Delta}$ are direct consequence
of the definition and the corresponding properties of weak convergence:

\begin{prop} \label{prop: delta}
The weak topology $\mathcal T_{\Delta}$ on a $\CAT(0)$ space $X$ 
is finer than the coconvex topology and coarser than the metric topology:
$$\mathcal T_{co} \subset \mathcal T_{\Delta}\subset \mathcal T_{metric} .$$
The topology $\mathcal T_{\Delta}$ is sequentially Hausdorff. Any metrically closed, bounded, convex subset $C\subset X$ 
is $\mathcal T_{\Delta}$-sequentially compact.
\end{prop}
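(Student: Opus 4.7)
The plan is to verify the four assertions separately. The sequentially Hausdorff property is immediate from \tref{thm1} combined with the uniqueness of weak limits of bounded sequences recalled in Section~\ref{sec: prel}: a $\mathcal T_{\Delta}$-convergent sequence is bounded and weakly convergent, and such a sequence has a unique weak limit. For the inclusion $\mathcal T_{\Delta}\subset \mathcal T_{metric}$ I will use that $\mathcal T_{\Delta}$ is sequential by \tref{thm1}, which reduces the claim to showing that every $\mathcal T_{\Delta}$-closed set is metrically sequentially closed; since metrically convergent sequences are bounded and converge weakly (the closest-point projection to any geodesic is $1$-Lipschitz), \tref{thm1} then yields $\mathcal T_{\Delta}$-convergence to the same limit.

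The main geometric content is the inclusion $\mathcal T_{co}\subset\mathcal T_{\Delta}$. Since $\mathcal T_{co}$ is generated by metrically closed convex subsets as closed sets, and $\mathcal T_{\Delta}$ is sequential, this reduces to showing that any metrically closed convex subset $C\subset X$ is closed under bounded weak limits of sequences. This is the step I expect to require actual $\CAT(0)$ geometry. The approach is by contradiction: assume $x_n\in C$ weakly converges to $x\notin C$, and let $y=\pi_C(x)\in C$ be the metric projection of $x$ onto $C$, so that $y\neq x$. The standard characterization of projections in $\CAT(0)$ spaces gives $\angle_y(x,z)\geq \pi/2$ for all $z\in C$. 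Combining this with the first variation formula and the convexity of $t\mapsto d(c(t),z)$ along the compact geodesic $c$ from $x$ to $y$, the closest-point projection $\mathrm{Proj}_c(z)$ equals $y$ for every $z\in C$. In particular $\mathrm{Proj}_c(x_n)=y$ for all $n$, contradicting the requirement $\mathrm{Proj}_c(x_n)\to x$ coming from weak convergence along $c$, since $c$ starts at $x$ and $x\neq y$.

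Finally, the $\mathcal T_{\Delta}$-sequential compactness of a metrically closed, bounded, convex subset $C$ combines the preceding paragraph with the weak sequential compactness of bounded sequences recalled in Section~\ref{sec: prel}. Given any sequence $(x_n)\subset C$, its boundedness provides a weakly convergent subsequence $x_{n_k}\to x$; the $\mathcal T_{\Delta}$-closedness of $C$ from the previous paragraph gives $x\in C$, and \tref{thm1} then yields $\mathcal T_{\Delta}$-convergence $x_{n_k}\to x$ inside $C$.
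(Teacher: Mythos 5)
Your proposal is correct, and it follows the same overall route as the paper, which simply declares the proposition a direct consequence of the definition of $\mathcal T_{\Delta}$ together with the standard properties of weak convergence (uniqueness of weak limits, weak sequential compactness of bounded sequences, and weak closedness of metrically closed convex sets) cited from the literature. The difference is that you supply a self-contained verification of the one genuinely geometric ingredient: that a metrically closed convex set $C$ contains the weak limit of any bounded sequence of its points. Your argument via the projection $y=\pi_C(x)$, the characterization $\angle_y(x,z)\ge \pi/2$ for $z\in C$, and convexity of $t\mapsto d(c(t),z)$ along $c=[x,y]$, giving $\mathrm{Proj}_c(x_n)=y\neq x$, is exactly the standard proof of this fact, so nothing is lost; what the paper's terser treatment buys is brevity, while yours makes the proposition independent of the external reference. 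One small inaccuracy worth fixing: for the inclusion $\mathcal T_{\Delta}\subset\mathcal T_{metric}$, the reduction to sequences uses that the \emph{metric} topology is sequential (being metrizable), not that $\mathcal T_{\Delta}$ is; the substance of your argument (metric convergence implies bounded weak convergence to the same limit, hence every $\mathcal T_{\Delta}$-closed set is metrically closed) is unaffected.
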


The less trivial statement that any closed, bounded, convex subset is $\mathcal T_{\Delta}$-compact will be derived later in Corollary \ref{prop: comp}. 

We finish the section with two simple examples. The first example is a direct consequence of the definition and the theorem of Hopf--Rinow:

\begin{ex}
Assume that the $\CAT(0)$ space $X$ is locally compact. Then $\mathcal T_{\Delta}$ coincides with the metric topology. 
\end{ex}

The second example is a special case of the fact that 
the weak convergence as defined above corresponds to the usual weak convergence in the case of Hilbert spaces, \cite{Bac} and Theorem of Eberlien-Smulian, \cite{Eberlein}, in the case of Hilbert spaces, saying that a subset is compact in the weak topology if and only if it is sequentially compact. 

\begin{ex}
For a Hilbert space $X$, the topology $\mathcal T_{\Delta}$ coincides with the weak topology of the Hilbert space and with $\mathcal T_{co}$. 
\end{ex}

\section{Example} \label{sec: ex}
We are going to show that $\mathcal T_{\Delta}$ can be non-Hausdorff:

\medskip

\noindent\emph{Proof of Proposition \ref{prop: haus}.}
Let $Y_1$ be a countable family of intervals $[0, \frac \pi 4]$ glued together at the common boundary point $0$.
Fix an endpoint $b$ among the countably many endpoints of the tree $Y_1$.
Choose a countable family of isometric copies of $Y_1$ and glue all of them together by identifying the chosen "endpoints" $b$ with each other.

The arising space $Y$ is a tree with a special point $p$ (the point at which all subtrees isometric to $Y_1$ are glued together).
Point $p$ is the unique circumcenter of the simplicial tree $Y$.
The tree has countably many branches at $p$ and every point at distance $\frac \pi 4$ from $p$.
There are no other branching points in $Y$; all edges of the tree $Y$ have length $\frac \pi 4$. 

\begin{wrapfigure}{o}{41 mm}
\centering
\includegraphics{mppics/pic-1}
\end{wrapfigure}

We denote by $E$ the set of endpoints of the tree $Y$ and by $B$ the set of the branching points
different from $p$ (thus the $\frac \pi 4$-sphere around $p$).
Any pair of different points of $B$ lie at distance $\frac \pi 2$ from each other. Any pair 
of different points in $E$ either are at distance $\pi$ and have $p$ as the midpoint or are at distance $\frac \pi 2$ and have a point from $B$ as their midpoint. 

Let $\hat X$ denote the Euclidean cone over $Y$. We identify $Y$ with the unit sphere around the tip $o$ in $\hat X$. For a point $y\in Y$ and a number $\lambda \geq 0$, we denote by $\lambda \cdot y$ the point in the cone $\hat X$ on the radial ray in the direction of $y$ at distance $\lambda$ from the vertex $o$. 

For any edge $I$ of $Y$ with endpoints $y_1,y_2$ consider the triangle $S_I$ defined by the points $o, 2\cdot y_1, 2\cdot y_2$ in $\hat X$. The union of all such triangles is a closed convex subset $X$ of $\hat X$. This subset $X$ is bounded and contains the unit ball $B_1(o)$. Moreover, $X$ is a $2$-dimensional simplicial complex with countably many simplices.

We are going to verify that the points $o$ and $\frac 1 2 \cdot p$ are not separated in 
the weak topology $\mathcal T_{\Delta}$ on space $X$.

Firstly, for any pair of different points in $E\subset Y\subset X$ the unique geodesic in $X$ connecting them 
either has its midpoint in $o$ (if the points are at distance $\pi$ in $Y$) or it has its midpoints in $\frac 1 {\sqrt 2} \cdot b$ for the unique midpoint $b\in B$ of the corresponding geodesic in $Y$. 

Given any sequence $(x_n)$ of elements in $E\subset Y\subset X$ with pairwise distance $\pi$ in $Y$, we see that the convex hull of $\{ x_n \}$ is the union of the geodesic segments $[o,x_n]$,
thus a tree with a unique vertex in $o$.
In this case, the sequence $(x_n)$ converges weakly to $o$.

Given any sequence $x_n$ of pairwise different elements in $E$ with pairwise distance $\frac \pi 2$ in $Y$, the convex hull of $\{ x_n \}$ is again a tree with a unique vertex
$\frac 1 {\sqrt 2} \cdot b$, the common
midpoint of any pair of different points in the sequence $(x_n)$. Thus, $(x_n)$ weakly converges to $\frac 1 {\sqrt 2} \cdot b$.

Similarly, for any sequence of different point $b_n \in B\subset Y\subset X$, the sequence $(b_n)$ weakly converges in $X$ to the point $\frac 1 {\sqrt 2} \cdot p$. Thus, by rescaling, the sequence $\frac 1 {\sqrt 2} \cdot b_n$ converges weakly to $\frac 1 2 \cdot p$.

Assume that $o$ and $\frac 1 2 \cdot p$ can be separated in $\mathcal T_{\Delta}$. Thus, we find $\mathcal T_{\Delta}$-closed subsets $C_1$ and $C_2$ such that $o\notin C_1$, $\frac 1 2 \cdot p\notin C_2$ and $C_1\cup C_2 = X$.

By above, $C_1$ cannot contain infinitely many points of $E$, which have in $Y$ pairwise distance $\pi$. 

Thus, for all but finitely many branch-points $b\in B \subset Y$ all points in $E$ at distance $\frac \pi 4$ from $b$ are contained in $C_2$. By above, for any such $b$ we must have $\frac 1 {\sqrt 2} \cdot b \in B$. Since we have infinitely many such points, we conclude $\frac 1 2 \cdot p\in C_2$, in contradiction to our assumption.

Thus, we have verified that $(X, \mathcal T_{\Delta})$ is not Hausdorff.
\qed

The provided example implies that $\mathcal T_{co}$ and $\mathcal T_{\Delta}$ may be different: 

\begin{lem} \label{lem: differ}
	The weak topology $\mathcal T_{\Delta}$ and coconvex topology $\mathcal T_{co}$ do not coincide on the bounded $\CAT(0)$ space $X$ constructed above.
\end{lem}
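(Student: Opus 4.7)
My plan is to exhibit a subset $A\subset X$ whose $\mathcal T_{co}$-closure strictly contains its $\mathcal T_\Delta$-closure; recall that $\mathcal T_{co}\subset\mathcal T_\Delta$ by \pref{prop: delta}, so this is the only possibility for them to differ. I will take $A=E$, the set of endpoints of $Y$ sitting inside $X$, and show that the point $\tfrac{1}{3}\cdot p$ lies in the $\mathcal T_{co}$-closure of $E$ but not in its $\mathcal T_\Delta$-closure.

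For the $\mathcal T_\Delta$-side, I would iterate weak sequential limits starting from $E$, reusing the weak-convergence analysis already performed in the proof of \pref{prop: haus}. Any sequence of pairwise distinct elements of $E$ weakly converges either to $o$ (if their pairwise $Y$-distance is $\pi$) or to $\tfrac{1}{\sqrt 2}\cdot b$ for some $b\in B$ (if they eventually lie in a single $E(b)$), and any sequence of pairwise distinct elements of $\tfrac{1}{\sqrt 2}\cdot B$ weakly converges to $\tfrac{1}{2}\cdot p$. A pigeonhole argument on bounded weakly convergent sequences shows that the set
\[
E_\infty := E \cup \{o\} \cup \bigl(\tfrac{1}{\sqrt 2}\cdot B\bigr) \cup \{\tfrac{1}{2}\cdot p\}
\]
is closed under weak sequential limits, hence $\mathcal T_\Delta$-closed, and therefore equals the $\mathcal T_\Delta$-closure of $E$. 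Finally, $\tfrac{1}{3}\cdot p\notin E_\infty$.

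The heart of the argument is the $\mathcal T_{co}$-side: for every finite cover $E\subset C_1\cup\cdots\cup C_k$ by closed convex subsets of $X$, some $C_i$ must contain $\tfrac{1}{3}\cdot p$. Suppose, for contradiction, that none does. The crucial geometric observation is that as soon as a single $C_i$ contains at least two points of $E(b_1)$ and at least two points of $E(b_2)$ for some $b_1\neq b_2$, convexity forces the midpoints $\tfrac{1}{\sqrt 2}\cdot b_1$ and $\tfrac{1}{\sqrt 2}\cdot b_2$ into $C_i$, then their common midpoint $\tfrac{1}{2}\cdot p$; moreover the midpoint of any pair of endpoints from different subtrees equals $o$, so $o\in C_i$ as well, and thus $[o,\tfrac{1}{2}\cdot p]\subset C_i$, forcing $\tfrac{1}{3}\cdot p\in C_i$, a contradiction. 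Therefore each $C_i$ can contain at least two elements of $E(b)$ for at most one $b\in B$; call this $b$, if it exists, the focus of $C_i$. For any $b\in B$ that is not the focus of any $C_i$, each $C_i$ contributes at most one point to $E(b)$, so the cover supplies at most $k$ points of $E(b)$, contradicting that $E(b)$ is infinite. Hence every $b\in B$ must be a focus; but there are at most $k$ foci and $B$ is infinite, the desired contradiction.

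The main obstacle is precisely this last combinatorial covering argument, which ties together the specific midpoint identifications in the Euclidean cone over the tree $Y$ (pairs of endpoints in $E(b)$ averaging to $\tfrac{1}{\sqrt 2}\cdot b$; pairs $\tfrac{1}{\sqrt 2}\cdot b_1,\tfrac{1}{\sqrt 2}\cdot b_2$ averaging to $\tfrac{1}{2}\cdot p$; pairs of endpoints from different subtrees averaging to $o$) with the infinite cardinality of both $B$ and each $E(b)$.
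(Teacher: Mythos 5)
Your proposal is correct and follows essentially the same route as the paper: the paper works with the $\mathcal T_{\Delta}$-closed set $A=E\cup\tfrac{1}{\sqrt 2}\cdot B\cup\{\tfrac12\cdot p\}\cup\{o\}$ (your $E_\infty$) and shows $\tfrac14\cdot p$ lies in its $\mathcal T_{co}$-closure via the same pigeonhole-plus-midpoint argument (two points in each of $E^{b_1},E^{b_2}$ force $o$, $\tfrac{1}{\sqrt2}\cdot b_i$, hence $\tfrac12\cdot p$ and the segment $[o,\tfrac12\cdot p]$ into one convex set). Your only deviations --- starting from $E$ and taking its weak sequential closure, using $\tfrac13\cdot p$ instead of $\tfrac14\cdot p$, and phrasing the counting via ``foci'' --- are cosmetic.
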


\begin{proof}
Consider the set 
$$A=E\cup \tfrac 1 {\sqrt 2} \cdot B \cup \{\tfrac 1 2 \cdot p\} \cup \{o\}$$ which has appeared above.
As explained in the proof of Proposition \ref{prop: haus} above, the set $A$ is $\mathcal T_{\Delta}$-closed.

We are going to prove that $\frac 1 4 \cdot p$ is contained in the $\mathcal T_{co}$ closure of $A$. Assuming the contrary, we find finitely many convex, metrically closed subsets $C_1,...,C_n$ in $X$ which cover $A$ and do not contain $\frac 1 4 \cdot p$. 

For any $b\in B$, consider the set $E^b$ of points in $E$ which are at distance $\frac \pi 4$ from $b$. Then a counting argument implies that at least one of the sets $C_i$ contains
at least 2 points in any of the sets $E^{b_1},E^{b_2}$, for different $b_1,b_2\in B$.
 Then this convex set $C_i$ contains the origin $o$ (as the midpoint of a point in $E^{b_1}$ and $E^{b_2}$), the points $\frac 1 {\sqrt 2} \cdot b_{i}$ and therefore their midpoint $\frac 1 2 \cdot p$. Hence, $C_i$ also contains the whole geodesic $[o,\frac 1 2 \cdot p]$ and, therefore, 
 $\frac 1 4 \cdot p \in C_i$, in contradiction to our assumption.

Thus, the set $A$ is not $\mathcal T_{co}$-closed, finishing the proof.
	\end{proof}

\section{Compactness} \label{sec: funct}

The following result can be seen as an analog of the theorem of Eberlein--Smulian in functional analysis. Unlike Theorem \ref{thm1}, here the $\CAT(0)$ geometry plays an important role several times: 

\begin{prop} \label{prop: eber}
	Let $(x_{\alpha})$ be a bounded net in a $\CAT(0)$ space $X$ weakly converging to a point $x$.
Then there exists a sequence $x_{\alpha_1},x_{\alpha_2},\dots$ of elements of the net weakly converging to $x$.
\end{prop}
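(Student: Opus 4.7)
My plan is to extract the desired sequence by a diagonal induction and then verify weak convergence by combining two CAT(0) quadratic estimates: a Pythagoras estimate coming from the construction, and an asymptotic-center estimate coming from a hypothetical stray weak limit.

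I will build the sequence inductively. At step $n$, suppose $x_{\alpha_1},\dots,x_{\alpha_{n-1}}$ have been chosen and set $C_{n-1}=\overline{\mathrm{conv}}\{x, x_{\alpha_1},\dots, x_{\alpha_{n-1}}\}$. As the metric closure of the countable set obtained by iterating midpoints of $\{x,x_{\alpha_1},\dots,x_{\alpha_{n-1}}\}$, this $C_{n-1}$ is a bounded closed convex separable subset of $X$. Fix a countable dense $D_{n-1}\subset C_{n-1}$ and enumerate $W=\bigcup_k D_k=\{w_1,w_2,\dots\}$ diagonally, so that $w_1,\dots,w_n$ are available at step $n$. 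I then select $\alpha_n$ in the net with
\[
d\bigl(\mathrm{Proj}_{[x,w_m]}(x_{\alpha_n}),x\bigr)<\tfrac{1}{n}\qquad\text{for every }m\le n,
\]
which is possible because each $[x,w_m]$ is a compact geodesic starting at $x$; weak convergence of the net gives, for each such $m$, an index past which the corresponding projection stays within $1/n$ of $x$, and directedness of the indexing set lets finitely many such conditions be met simultaneously.

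To verify that $(x_{\alpha_n})$ converges weakly to $x$, I first combine the CAT(0) projection-Pythagoras inequality $d(u,p)^2\ge d(u,q)^2+d(q,p)^2$ (with $q=\mathrm{Proj}_{[x,z]}(u)$ and $p\in[x,z]$) with $\mathrm{Proj}_{[x,z]}(x_{\alpha_n})\to x$ to obtain, for every $z\in W$,
\[
\limsup_n d(x_{\alpha_n},z)^2\ge\limsup_n d(x_{\alpha_n},x)^2+d(x,z)^2,
\]
and the same estimate along every subsequence. Suppose for contradiction that the sequence does not weakly converge to $x$. By Proposition~\ref{prop: delta} some subsequence $(x_{\alpha_{n_k}})$ converges weakly to $y\ne x$; since $\overline{\mathrm{conv}}\{x,x_{\alpha_n}:n\ge 1\}=\overline{W}$ is $\mathcal T_\Delta$-closed, $y\in\overline{W}$, and we pick $w_{m_j}\to y$ in $W$. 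Applying the Pythagoras estimate to the subsequence at $z=w_{m_j}$ and using uniform convergence $d(\cdot,w_{m_j})\to d(\cdot,y)$ on the bounded set, letting $j\to\infty$ yields
\[
\limsup_k d(x_{\alpha_{n_k}},y)^2\ge\limsup_k d(x_{\alpha_{n_k}},x)^2+d(x,y)^2.
\]
On the other hand, $y$ is the asymptotic center of $(x_{\alpha_{n_k}})$, and applying the CAT(0) comparison to midpoints of $[y,x]$ (in the limit $t\to 0^+$) gives the dual inequality
\[
\limsup_k d(x_{\alpha_{n_k}},x)^2\ge\limsup_k d(x_{\alpha_{n_k}},y)^2+d(x,y)^2.
\]
Adding the two forces $2d(x,y)^2\le 0$, contradicting $y\ne x$.

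The conceptual heart of the proof is the interplay of the two CAT(0) quadratic estimates: the construction pins down $(x_{\alpha_n})$ relative to geodesics from $x$, while a stray weak limit $y$ would pin it down relative to geodesics from $y$, and the countable dense set $W$ in the closed convex hull of the sequence is the bridge letting information pass between these two reference points. The main obstacle is managing this transfer through the limit $w_{m_j}\to y$ so that the two estimates cleanly contradict each other; the diagonal bookkeeping in the inductive construction, the appeal to sequential weak compactness, and the separability of the closed convex hull of a finite set are routine given standard CAT(0) tools.
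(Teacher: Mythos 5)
Your proof is correct, and it reaches the conclusion by a genuinely different mechanism than the paper, even though the overall architecture (an inductive, diagonal choice of indices using weak convergence of the net to control projections onto countably many geodesics emanating from $x$, followed by a contradiction with a hypothetical stray subsequential weak limit) is the same. The paper controls projections onto the geodesics $[x,m_S]$, where $m_S$ runs over circumcenters of all finite subsets of the previously chosen points, and derives the contradiction quantitatively: the stray limit $z$ forces the circumradius of the tail to be bounded strictly below the normalized distance $1$, while the construction forces the circumcenters $m_k$ to move a definite amount at each step, so by $2$-convexity the circumradii $t_k$ would grow without bound. You instead control projections onto $[x,w]$ for $w$ in a countable dense subset of the closed convex hulls of the chosen points, and play two asymptotic Pythagoras inequalities against each other: the construction gives $\limsup d(x_{\alpha_{n_k}},z)^2\ge \limsup d(x_{\alpha_{n_k}},x)^2+d(x,z)^2$ for all $z$ in the dense set, which transfers to the stray limit $y$ because $y$ lies in the closed convex hull (closed convex sets being weakly sequentially closed, i.e.\ $\mathcal T_{co}\subset\mathcal T_\Delta$), while weak convergence of the subsequence to $y$ gives the dual inequality, and summing forces $d(x,y)=0$. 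This is essentially the Hilbert-space proof transplanted to $\CAT(0)$, and it trades the paper's self-contained circumcenter argument for two standard external facts: weak closedness of closed convex sets and the asymptotic-center description of weak limits. Both are covered by the paper's preliminaries and the references to B\v{a}c\'ak, so this is legitimate; moreover, your dual inequality does not really need the asymptotic-center characterization, since weak convergence to $y$ in the paper's projection formulation applied to the geodesic $[y,x]$, combined with the same projection Pythagoras inequality, yields $\limsup d(x_{\alpha_{n_k}},x)^2\ge \limsup d(x_{\alpha_{n_k}},y)^2+d(x,y)^2$ directly, which would make your argument rely only on the definition plus the weak closedness of convex hulls. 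Your separability claim for $\overline{\mathrm{conv}}$ of a finite set (closure of the iterated-midpoint set) and the diagonal bookkeeping of the enumeration are both fine as stated.
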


\begin{proof}
	Replacing the net by a subnet we may assume that the net $r_{\alpha}\mathrel{:=}d(x_{\alpha},x)$ 
	of real numbers converges to some $r\geq 0$. If $r=0$, we find some $x_{\alpha _i}$ such that $\lim _{i\to \infty} r_{\alpha_i} =0$. Thus, the sequence $x_{\alpha _i}$ converges to $x$ in the metric topology, and, therefore, also weakly. 
Thus, we may assume $r>0$ and, after rescaling, $r=1$. 

We choose inductively $\alpha _k \in I$, for $k=1,2,...$, starting with an arbitrary $\alpha _1$. Let the elements $\alpha _1\leq ...\leq \alpha _k$ in $I$ be already chosen.	 

For any non-empty subset $S \subset \{1,...,k\}$, denote by $m_S$ the unique circumcenter $m_S$ of the finite set $\{x_{\alpha _i}, i\in S\}$.
Since the net $(x_{\alpha})$ converges weakly to $x$ and $(r_{\alpha})$ converges to $1$, we find some $\alpha _{k+1} \geq \alpha _k$ with the two following properties, for any $\alpha \geq \alpha _{k+1}$: 

1) $|r_{\alpha} -1| \leq 2^{-k-1}$.

2) For all nonempty $S\subset \{1,...,k\}$ the projection $Proj _c (x_{\alpha})$ of $x_{\alpha}$ onto the geodesic
$c=[xm_S]$ has distance at most $2^{-k-1}$ from $x$.

Note that any subsequence of the sequence $(x_{\alpha_i})$ has also the properties (1) and (2). 
We claim that the so-defined sequence $(x_{\alpha _i})$ converges to $x$ weakly. 
The proof of the claim relies only on the strict convexity of the squared distance functions
and is rather straightforward.
For the convenience of the reader, we present the somewhat lengthy details. 

Assuming the contrary and replacing the sequence by a subsequence we may assume that the sequence converges weakly to a point $z\neq x$.
Set $\delta\mathrel{:=} d(z,x)$.
Choosing yet another subsequence we may assume that $s_{\alpha_i}\mathrel{:=} d(x_{\alpha_i} , z)$ converge to some $s \geq 0$, for $i\to \infty$. 

We set $\varepsilon \leq \frac {\delta ^2} {10}$ and find some $i_0$ such that 
$(1- 2^{-i_0-1})^2 > 1 -\varepsilon$ and such that, for all $i \geq i_0$,
$$|r_{\alpha_i} ^2 -1| <\varepsilon \; ; \; |s_{\alpha_i} ^2-s^2| <\varepsilon . $$
Using the weak convergence of $(x_{\alpha_i})$ to $z$ and $\CAT(0)$ comparison, we may assume in addition, that for all $i\geq i_0$ 
$$d^2(x_{\alpha _i}, x) -d^2(x_{\alpha_i} ,z) ^2 \geq	\delta ^2 -\varepsilon = 9 \varepsilon .$$

For $j=1,2...$ we consider the point $p_j\mathrel{:=} x_{\alpha _{i_0+ j}}$. By above, the circumradius $t$
of the countable set $\{p_j\}$ satisfies 
$$t^2 < 1 -\tfrac 1 2 \delta ^2 = 1-5\varepsilon \,.$$ 
Denote by $0\leq t_k \leq t$ the circumradius of the set $\{p_1,...,p_k\}$.
We claim that there exists some positive $\rho >0$, such that $t^2_{k+1}-t^2_k >\rho$ for all $k\geq 1$. Since the sequence $(t_k)$ is bounded above by $t$, this would provide a contradiction and finish the proof.

In order to prove the claim, consider the circumcenter $m_k$ of the subset $p_1,...,p_k$.
Thus, $m_k$ is the point at which the $2$-convex function, 
$$f(y)\mathrel{:=} \max_{1\leq i \leq k} d^2 (y,p_i)$$ 
assumes its unique minimum $t_k ^2$. By the $2$-convexity, we deduce
$$f(m_{k+1}) \geq t_k ^2 + d^2 (m_k,m_{k+1}).$$
On the other hand, $f(m_{k+1}) \leq t^2_{k+1}$, hence 
$$t_{k+1} ^2 \geq t_k ^2 + d^2 (m_k,m_{k+1}).$$ 

By construction of the sequence $x_{\alpha_i}$, we have 
$$d^2(p_{k+1}, m_k) \geq (1- 2^{-i_0-1})^2 > 1 -\varepsilon .$$
Thus, by the triangle inequality and the fact 
$$d^2 (p_{k+1} , m_{k+1}) \leq t_{k+1} ^2 \leq t^2 \leq 1-5\varepsilon \,$$
we obtain some positive lower bound $\rho>0$ on $d^2(m_k,m_{k+1})$. 
This finishes the proof of the claim and of the proposition. 
\end{proof}

As a consequence, we derive: 

\begin{lem} \label{lem: h}
If a bounded net $(x_{\alpha})$ in $X$ converges to the point $x$ weakly then $(x_{\alpha})$ converges to $x$ with respect to the $\mathcal T_{\Delta}$-topology.
\end{lem}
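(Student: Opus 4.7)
The plan is a proof by contradiction reducing from nets to sequences via Proposition \ref{prop: eber}. Suppose the bounded net $(x_{\alpha})$ converges weakly to $x$ but does not converge to $x$ in $\mathcal T_{\Delta}$. Then there is a $\mathcal T_{\Delta}$-open neighborhood $U$ of $x$ such that for every index $\alpha_0$ there exists some $\alpha \geq \alpha_0$ with $x_{\alpha} \notin U$. These ``bad'' indices form a cofinal subset of the directed set $I$, and they define a subnet $(x_{\alpha_{\beta}})$ entirely contained in the $\mathcal T_{\Delta}$-closed set $A := X \setminus U$.

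Since any subnet of a weakly converging net converges weakly to the same limit (as recorded in the preliminaries), the subnet $(x_{\alpha_{\beta}})$ is still bounded and converges weakly to $x$. Now I would invoke Proposition \ref{prop: eber}: from this weakly convergent bounded net I can extract a sequence $(x_{\alpha_{\beta_n}})$ of its elements which converges weakly to $x$. Since the sequence lies in $A$ and is bounded and weakly convergent to $x$, the defining property of $\mathcal T_{\Delta}$-closed sets forces $x \in A$, contradicting $x \in U$. Hence $(x_{\alpha})$ converges to $x$ in $\mathcal T_{\Delta}$.

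The main conceptual step is the reduction from nets to sequences, which is exactly the content of Proposition \ref{prop: eber}; everything else is routine manipulation with the definition of $\mathcal T_{\Delta}$ from Theorem \ref{thm1} and the basic fact that $\mathcal T_{\Delta}$-closed sets are precisely those closed under weak sequential limits of bounded sequences. I expect no genuine obstacle beyond applying Proposition \ref{prop: eber}; the argument is the standard Eberlein--Smulian-style deduction that in this context, sequential adherence suffices to detect failure of net convergence to a prescribed point.
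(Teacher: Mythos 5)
Your proof is correct and follows essentially the same route as the paper: argue by contradiction, pass to a subnet lying in the $\mathcal T_{\Delta}$-closed complement of a neighborhood of $x$, apply Proposition \ref{prop: eber} to extract a weakly convergent sequence from that subnet, and contradict the definition of $\mathcal T_{\Delta}$-closed sets. Your explicit verification that the bad indices form a cofinal subset (hence a genuine subnet) is a detail the paper leaves implicit, but the argument is the same.
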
 

\begin{proof}
Assume the contrary.
Then, replacing the net by a subnet, we find a $\mathcal T_{\Delta}$-open neighborhood $U$ of $x$ which does not contain any $x_{\alpha}$. Using 
Proposition \ref{prop: eber} we find a sequence $x_{\alpha_1},....,x_{\alpha _k},...$ of elements of the net converging weakly to $x$. Then $x$ is contained in the $\mathcal T_{\Delta}$-closed set $X\setminus U$ which contains all elements of the sequence. This contradicts the definition of $\mathcal T_{\Delta}$-closed sets. 	
\end{proof}

Since any bounded net has weakly convergent subnets, we infer:

\begin{cor} \label{prop: comp}
	Every bounded $\mathcal T_{\Delta}$-closed subset $A$ of $X$ is $\mathcal T_{\Delta}$-compact. 	
\end{cor}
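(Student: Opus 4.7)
The plan is to verify $\mathcal T_\Delta$-compactness of $A$ directly, by checking that every net in $A$ admits a subnet that $\mathcal T_\Delta$-converges to a point of $A$. So I let $(x_\alpha)$ be an arbitrary net in $A$; since $A$ is bounded, so is the net. From the basic facts about weak convergence recalled in the Preliminaries, every bounded net in $X$ has a weakly convergent subnet, so after passing to a subnet I may assume that $(x_\alpha)$ itself weakly converges to some $x\in X$.

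The next step is to upgrade this to $\mathcal T_\Delta$-convergence of the subnet, which is exactly the content of Lemma \ref{lem: h}. It then only remains to show that $x\in A$. This is where the Eberlein--Smulian analog enters: Proposition \ref{prop: eber} produces a sequence of elements of the subnet --- hence of $A$ --- weakly converging to $x$. Since this sequence is bounded and $A$ is $\mathcal T_\Delta$-closed, the very definition of $\mathcal T_\Delta$-closedness used in the proof of Theorem \ref{thm1} forces $x\in A$, which is what we needed.

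The main conceptual obstacle has already been overcome in Proposition \ref{prop: eber}: without such a passage from nets to sequences, one could only conclude that $x$ lies in the sequential closure of $A$, which need not coincide with $A$ once the topology ceases to be Hausdorff and convergence of nets is no longer detected by sequences. Given Proposition \ref{prop: eber} and Lemma \ref{lem: h}, the corollary reduces to the formal chain described above and presents no further hidden difficulty; accordingly, I expect the proof in the paper to be essentially a one-paragraph deduction.
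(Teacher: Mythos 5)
Your proposal is correct and follows essentially the same route as the paper, which deduces the corollary in one line from Lemma \ref{lem: h} together with the fact that every bounded net has a weakly convergent subnet. The only (harmless) redundancy is your second appeal to Proposition \ref{prop: eber} to get $x\in A$: once the subnet $\mathcal T_{\Delta}$-converges to $x$ by Lemma \ref{lem: h}, the $\mathcal T_{\Delta}$-closedness of $A$ already forces $x\in A$ by general topology.
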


Now we provide:
\begin{proof}[Proof of Theorem \ref{thm2}]
	Let $\mathcal T$ be a topology on $X$, such that a bounded net $(x_{\alpha})$ weakly converges to $x$ if and only if this net $\mathcal T$-converges to $x$. Since any net has at most one weak limit point and since the Hausdorff property can be recognized by the uniqueness of limit points of nets, we deduce that any bounded subset of $X$ is Hausdorff with respect to $\mathcal T$. 
	
	 Let $A$ be a bounded subset of $X$. By definition, $A$ is $\mathcal T$-closed if and only if it contains all weak limit points of any net $(x_{\alpha})$ of elements in $A$. From 
	 Proposition \ref{prop: eber}, this happens if and only if $A$ contains all weak limit points of any \emph{sequence} of elements in $A$. Thus, if and only if $A$ is $\mathcal T_{\Delta}$-closed. We infer, that $\mathcal T$ coincides with $\mathcal T_{\Delta}$ on bounded subsets.

	Assume, on the other hand, that the weak topology $\mathcal T_{\Delta}$ is Hausdorff on any ball in $X$. We claim that a bounded net $(x_{\alpha})$ converges weakly to $x$ if and only if $(x_{\alpha})$ converges to $x$ with respect to $\mathcal T_{\Delta}$. 
	
Due to Lemma \ref{lem: h}, the only if conclusion always holds.
On the other hand, assume that
$(x_{\alpha})$ converges to $x$ with respect to $\mathcal T_{\Delta}$ but does not weakly converge to $x$.
Replacing $(x_{\alpha})$ by a subnet we may assume that $(x_{\alpha})$ weakly converges to another point $y$. Due to Lemma \ref{lem: h}, this implies that the net $(x_{\alpha})$ converges to the point $y$ with respect to the topology $\mathcal T_{\Delta}$. But this contradicts the assumption that $\mathcal T_{\Delta}$ is Hausdorff on the bounded ball which contains the net $(x_{\alpha})$. 
	
This proves the "if"-direction and finishes the proof of the theorem.	
\end{proof}

\begin{rem}
Using the considerations above, it is not difficult to prove another form of Theorem \ref{thm2}.
Namely, the topology $\mathcal T_{\Delta}$ is Hausdorff on any bounded subset of $X$
(and thus weak convergence of bounded nets is equivalent to the $\mathcal T_{\Delta}$-convergence) if and only if the topology $\mathcal T_{\Delta}$ is \emph{Frechet--Urysohn} on any bounded set. Recall, that a topology $\mathcal T$ is called Frechet--Urysohn, if the closure of any set $A$ in this topology is the set of all $\mathcal T$-limit points in $X$ of all \emph{sequences} contained in $A$. 
\end{rem}

\section{Coconvex topology} \label{sec: conv}
The coconvex topology $\mathcal T_{co}$ is coarser than $\mathcal T_{\Delta}$, Proposition \ref{prop: delta}.
Thus, convergence of sequences (nets) with respect to $\mathcal T_{\Delta}$ implies convergence with respect to $\mathcal T_{co}$.
This immediately implies that any bounded, $\mathcal T_{co}$-closed set is $\mathcal T_{co}$-compact and $\mathcal T_{co}$-sequentially compact.

\begin{prop}
The topologies $\mathcal T_{co}$ and $\mathcal T_{\Delta}$ coincide on all bounded subsets of a $\CAT(0)$ space $X$ if and only if the topology $\mathcal T_{co}$ is sequential and sequentially Hausdorff on every closed ball $B_r(x)$ in $X$.
This happens if $B_r(x)$ is $\mathcal T_{co}$-Hausdorff.
\end{prop}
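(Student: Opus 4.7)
The plan is to treat the two directions of the equivalence separately, and then obtain the final sufficient condition from a classical compactness-versus-Hausdorffness argument. The basic setup is that every closed ball $B_r(x)$ is metrically closed, bounded, and convex, hence both $\mathcal T_{co}$-closed (by definition of $\mathcal T_{co}$) and $\mathcal T_{\Delta}$-closed (by Proposition \ref{prop: delta}). Since every bounded subset is contained in such a ball, it suffices throughout to work on balls, where the subspace topologies behave well.

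For the ``only if'' direction, assume the two topologies coincide on every bounded subset. Since $\mathcal T_{\Delta}$ is sequential on $X$ by Theorem \ref{thm1} and $B_r(x)$ is $\mathcal T_{\Delta}$-closed, the subspace $(B_r(x), \mathcal T_{\Delta}|_{B_r(x)})$ is itself sequential; it is sequentially Hausdorff by Proposition \ref{prop: delta}. The assumed coincidence transports both properties to $\mathcal T_{co}|_{B_r(x)}$. Conversely, if $\mathcal T_{co}$ is sequential and sequentially Hausdorff on every $B_r(x)$, I will verify that every $\mathcal T_{\Delta}$-closed subset $A \subset B_r(x)$ is $\mathcal T_{co}$-closed in $B_r(x)$; combined with the inclusion $\mathcal T_{co} \subset \mathcal T_{\Delta}$, this yields equality of the subspace topologies. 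By sequentiality of $\mathcal T_{co}|_{B_r(x)}$, it suffices to check sequential closedness: given $(a_n) \subset A$ with $a_n \to y$ in $\mathcal T_{co}|_{B_r(x)}$, the key move is to extract, via the $\mathcal T_{\Delta}$-sequential compactness of $B_r(x)$ from Proposition \ref{prop: delta}, a subsequence $(a_{n_k})$ that weakly converges to some $z$. By Theorem \ref{thm1} this subsequence $\mathcal T_{\Delta}$-converges to $z$, hence a fortiori $\mathcal T_{co}$-converges to $z$. Sequential Hausdorffness of $\mathcal T_{co}|_{B_r(x)}$ then forces $y = z$, and the $\mathcal T_{\Delta}$-closedness of $A$ gives $z \in A$.

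For the final clause, suppose $\mathcal T_{co}$ is Hausdorff on $B_r(x)$. The identity map $(B_r(x), \mathcal T_{\Delta}|_{B_r(x)}) \to (B_r(x), \mathcal T_{co}|_{B_r(x)})$ is a continuous bijection because $\mathcal T_{co} \subset \mathcal T_{\Delta}$; its domain is compact by Corollary \ref{prop: comp}, and its target is Hausdorff by assumption. The classical fact that a continuous bijection from a compact space to a Hausdorff space is a homeomorphism yields $\mathcal T_{co}|_{B_r(x)} = \mathcal T_{\Delta}|_{B_r(x)}$, which immediately supplies both sequentiality and sequential Hausdorffness on the ball. The main obstacle I foresee is the limit-matching step in the ``if'' direction, where the coarse $\mathcal T_{co}$-limit must be tied to the finer weak limit of an extracted subsequence using the sequential Hausdorff hypothesis; the rest is either a direct invocation of earlier results or standard general topology.
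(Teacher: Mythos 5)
Your proposal is correct and follows essentially the same route as the paper: the ``only if'' part comes from Proposition~\ref{prop: delta} (plus sequentiality of $\mathcal T_{\Delta}$ from Theorem~\ref{thm1}), the ``if'' part checks $\mathcal T_{co}$-sequential closedness of a $\mathcal T_{\Delta}$-closed set by extracting a weakly convergent subsequence via $\mathcal T_{\Delta}$-sequential compactness and matching limits through sequential Hausdorffness, and the Hausdorff case uses the compact-to-Hausdorff continuous bijection argument with Corollary~\ref{prop: comp}. The only differences are presentational (you argue on subspace topologies of balls and directly, where the paper replaces $X$ by a ball and argues by contradiction).
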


\begin{proof}
We may replace $X$ by a ball $B_r(x)$ and assume that $X$ is bounded. The only if statement follows from Proposition \ref{prop: delta}. 

On the other hand, assume that $\mathcal T_{co}$ is sequential and sequentially Hausdorff on the bounded $\CAT(0)$ space $X$. Due to Proposition \ref{prop: delta}, the identity map
$Id\colon (X, \mathcal T_{\Delta}) \to (X, \mathcal T_{co} )$ is continuous. 
In order to prove that the inverse $Id\colon(X, \mathcal T_{co}) \to (X, \mathcal T_{\Delta})$ is continuous, consider a $\mathcal T_{\Delta}$-closed subset $A$ and assume that 
$A$ is not $\mathcal T_{co}$-closed. Since $\mathcal T_{co}$ is sequential, we find a sequence $(x_n)$ in $A$ which $\mathcal T_{co}$-converges to a point $x \in X\setminus A$. Using that $X$ is $\mathcal T_{\Delta}$-sequentially compact, we may replace $(x_n)$ by a subsequence and assume that $(x_n)$ converges to some point $y$ in $X$ with respect to $\mathcal T_{\Delta}$.
Then, by Proposition \ref{prop: delta}, the sequence converges to $y$ also with respect to $\mathcal T_{co}$. The assumption that $\mathcal T_{co}$ is sequentially Hausdorff gives us 
$x=y$. Since $A$ is $\mathcal T_{\Delta}$-closed, we deduce $x=y\in A$. This contradiction implies that $Id\colon(X, \mathcal T_{co} ) \to (X, \mathcal T_{\Delta})$ is continuous, hence
$\mathcal T_{\Delta} =\mathcal T_{co}$.

Finally, if $\mathcal T_{co}$ is Hausdorff on the bounded $\CAT(0)$ space $X$, then the compactness
of $\mathcal T_{\Delta}$ and the continuity of the identity map
$Id\colon (X, \mathcal T_{\Delta}) \to (X, \mathcal T_{co} )$ imply $\mathcal T_{\Delta} =\mathcal T_{co}$.
\end{proof}

In the proof of Proposition \ref{prop: cohaus} below, we assume more knowledge of non-positive curvature than in the rest of this paper.
We refer 
to \cite{LN1} for properties of geodesically complete $\CAT(0)$ spaces,
to \cite{Schwer} for properties of cubical complexes,
and to \cite{Anderson} and \cite{Borbely} for manifolds of pinched negative curvature.

\begin{proof}[Proof of Proposition \ref{prop: cohaus}]
Assume first that $X$ is homeomorphic to the plane $\R^2$.
Then each geodesic $\gamma \colon[a,b]\to X$ extends to a geodesic $\hat \gamma \colon\R \to X$, \cite{BH}.
Moreover, by Jordan's theorem, $\hat \gamma$ divides $X$ into two connected components both having $\hat \gamma$ as their boundaries. The closures of the connected components are convex.
Thus, the open components are $\mathcal T_{co}$ open.
	
In order to prove that $\mathcal T_{co}$ is Hausdorff it suffices to find, for any pair of points $x,y$, some geodesic $\gamma \colon\R \to X$, such that $x$ and $y$ are in different components of $X\setminus \gamma$.
We connect $x$ and $y$ by a geodesic $\eta$ and take the midpoint $m$ of $\eta$.
We find two points $p^{\pm}$ sufficiently close to $m$ which lie in different components of $X\setminus \hat \eta$ for some extension of $\eta$ to an infinite geodesic.
Then consider a geodesic $\gamma\colon\R \to X$ which contains $p^+$ and $p^-$.
The geodesic $\gamma$ intersect $\eta$ between $x$ and $y$.
We infer that $x$ and $y$ lie in different components of $X\setminus \gamma$.
This finishes the proof if $X$ is homeomorphic to a plane.

	Assume now that $X$ is a finite-dimensional cubical complex and choose $x,y\in X$. Taking a sufficiently fine cubical subdivision, we may assume that the diameter of all cubes is much smaller than the distance between $x$ and $y$. Then the geodesic between $x$ and $y$ intersects at least one \emph{hyperwall} in $X$. Any such hyperwall is a convex subsets dividing $X$ into two connected and convex components. As above, we deduce that $x$ and $y$ are separated in $\mathcal T_{co}$.

	Finally, let $X$ be a Riemannian manifold with pinched negative curvature and let $x,y\in X$
	be arbitrary different points. Fix $r>d(x,y)$ and set $B=B_r (x)$. By Anderson's construction, \cite{Anderson}, see also \cite[Theorem 2.1]{Borbely}, we find finitely many closed convex subsets $C_i$ in $X\setminus B$, such that $V\mathrel{:=}X\setminus \cup _{i=1}^m C_i$
	is bounded. Then $V$ is a $\mathcal T_{co}$-open set containing $B$ and contained in some larger closed ball $B'$. 
	
	On the compact ball $B'$ the topology $\mathcal T_{co}$ coincides with the metric topology, \cite[Lemma 17]{Monod}, hence it is Hausdorff. Thus, we find $\mathcal T_{co}$-open subsets 
	$U_1$ and $U_2$ in $X$ containing $x$ and $y$, respectively, such that $U_1\cap U_2\cap B'$ is empty. Then $U_1\cap U$ and $U_2\cap U$ are disjoint $\mathcal T_{co}$-neighborhoods of $x$ and $y$ in $X$.
\end{proof}

\begin{rem}
	As the proof and the reference to \cite{Borbely} shows, in condition (2) one can replace the pinching by the assumption that the quotient of the minimal and maximal curvature in the ball $B_r(x_0)$ around some chosen point $x_0$ is at most $2^{\lambda r}$ for some $\lambda \in \R$. Moreover, a closer look at the proof shows that under the assumptions (1) or (2),
	the topology $\mathcal T_{co}$ coincides with the metric topology on all of $X$.	 
\end{rem}

We discuss finally two examples showing that the coconvex topology can be quite strange even for rather regular spaces. Below we denote for a locally compact $\CAT(0)$ space $X$ by $X^{\infty}$ its boundary at infinity with its cone topology, \cite{BH}. Recall that $X^{\infty}$ is compact.

\begin{lem} \label{lem: anc}
	Let $X$ be a locally compact $\CAT(0)$ space. Assume that $X$ is not bounded and for any closed convex subset $A$ of $X$ different from $X$, the boundary at infinity $A^{\infty}$ is nowhere dense in $X^{\infty}$. Then the coconvex topology $\mathcal T_{co}$ on $X$ is non-Hausdorff and not first-countable.
\end{lem}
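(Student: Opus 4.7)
The plan is to exploit the hypothesis that the boundary at infinity of every proper closed convex subset is nowhere dense in the compact Hausdorff (hence Baire) space $X^\infty$, so that geodesic rays toward generic boundary points can be used to produce explicit witnesses in basic $\mathcal T_{co}$-neighborhoods of any chosen point. Throughout, I use that $X$ is proper (locally compact and complete, via Hopf--Rinow), so that for any closed convex $A\subset X$ the cone closure of $A$ in $X\cup X^\infty$ equals $A\cup A^\infty$.

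As a preliminary step I would record the following lemma: if $A\subset X$ is closed convex, $\xi\in X^\infty\setminus A^\infty$, and $\gamma$ is a geodesic ray with $\gamma(\infty)=\xi$, then $\gamma(t)\notin A$ for all sufficiently large $t$. Indeed, if $\gamma(t_n)\in A$ for some $t_n\to\infty$, then $\gamma(t_n)\to\xi$ in the cone topology, so $\xi$ belongs to the cone-closure $A\cup A^\infty$, and since $\xi\in X^\infty$ this forces $\xi\in A^\infty$, a contradiction.

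For the non-Hausdorff part, I would take two arbitrary distinct points $x,y\in X$ and any basic $\mathcal T_{co}$-neighborhoods $U_x=X\setminus(C_1\cup\dots\cup C_n)$ of $x$ and $U_y=X\setminus(D_1\cup\dots\cup D_m)$ of $y$, with each $C_i$ and $D_j$ a proper closed convex subset. By hypothesis each of the finitely many boundaries $C_i^\infty$, $D_j^\infty$ is nowhere dense in $X^\infty$, so their union is a closed nowhere dense subset of $X^\infty$; its complement is nonempty, so I pick $\xi\in X^\infty$ outside this union. A ray $\gamma$ to $\xi$ (from any base point) satisfies, by the preliminary lemma, $\gamma(t)\in U_x\cap U_y$ for all large $t$. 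Since every $\mathcal T_{co}$-neighborhood contains such a basic one, $x$ and $y$ cannot be separated by disjoint $\mathcal T_{co}$-open sets.

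For the failure of first-countability, I would argue by contradiction: suppose some $x\in X$ has a countable $\mathcal T_{co}$-neighborhood basis $\{U_n\}$ and replace each $U_n$ by a basic neighborhood $V_n=X\setminus\bigcup_{i=1}^{k_n}C_{n,i}$ inside it, producing countably many proper closed convex sets $C_{n,i}$ with $x\notin C_{n,i}$; the $V_n$ still form a basis at $x$. Each $C_{n,i}^\infty$ is nowhere dense in the compact Hausdorff space $X^\infty$, which is Baire, so $\bigcup_{n,i}C_{n,i}^\infty$ is meager and admits a point $\xi$ in its complement. Let $\gamma$ be the ray from $x$ to $\xi$ and set $y_k=\gamma(k)$. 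The preliminary lemma, applied to each fixed $V_n$ (a finite union), yields that $y_k\in V_n$ for all large $k$; by the assumed basis property this means $y_k\to x$ in $\mathcal T_{co}$. To contradict this I would use the horoball trick: let $b_\xi$ be the Busemann function at $\xi$ normalized with $b_\xi(x)=0$, so that $b_\xi(y_k)=-k$. The horoball $H=\{b_\xi\le -1\}$ is closed and convex, does not contain $x$, yet contains every $y_k$ with $k\ge 1$; hence $X\setminus H$ is a $\mathcal T_{co}$-neighborhood of $x$ disjoint from the tail of $(y_k)$, contradicting $y_k\to x$ in $\mathcal T_{co}$.

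The main obstacle is purely expository: cleanly stating and proving the tail-leaves-$A$ lemma and justifying that $X^\infty$ is a compact Hausdorff Baire space with the cone closures of closed convex sets behaving as expected. Once this interface with the visual boundary is in place, the non-Hausdorff statement is a one-line Baire category argument on a finite union, and the first-countability failure comes from combining the countable Baire category step with the horoball construction; no further $\CAT(0)$ subtleties seem required.
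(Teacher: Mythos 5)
Your proof is correct and follows essentially the same route as the paper: Baire category applied to the nowhere dense boundaries $A^{\infty}$ of the (finitely or countably many) closed convex sets inside the compact space $X^{\infty}$, together with a ray to a generic point at infinity whose tail lies in every basic coconvex neighborhood. The only cosmetic difference is in the first-countability step, where you exclude the tail of the ray with the horoball $\{b_\xi \le -1\}$, while the paper uses the complement of a ray to $\xi$ chosen to avoid $x$; both are the same mechanism.
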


\begin{proof}
Since $X^{\infty}$ is a compact space, it is not a countable union of nowhere dense subsets, by Baire's theorem. 

Therefore, by our assumption, $X$ is not a finite union of closed convex subsets different from $X$. Thus, any finite intersection of non-empty $\mathcal T_{co}$-open subsets is non-empty.
In particular, $\mathcal T_{co}$ is not Hausdorff.

Assume now that $\mathcal T_{co}$ is first-countable on $X$, fix an arbitrary $x\in X$ and a $\mathcal T_{co}$-fundamental system of its open neighborhoods $U_1,...,U_n,...$.
By definition of $\mathcal T_{co}$, we may assume 
that each $U_i$ is the complement of a finite union of closed convex subsets $K_i ^j$, not containing the point $x$. Hence, the union of the boundaries at infinity 
$$\cup _{i,j} (K_i ^j ) ^{\infty } \subset X^{\infty}$$
is not all of $X^{\infty}$. Consider an arbitrary point $z\in X^{\infty}$ not contained in this union and a ray $\gamma$ in $X$ with endpoint $z \in X^{\infty}$, such that $x$ is not on $\gamma$. Then $X\setminus \gamma$ is a $\mathcal T_{co}$-open neighborhood of $x$ which does not contain any of the set $U_i$. This contradiction shows that $\mathcal T_{co}$ is not first-countable. 
\end{proof}

The first example directly follows from
Lemma \ref{lem: anc} above and \cite[Theorem B, Corollary C]{Ancona}:

\begin{ex} \label{ex: anc}
There exists a smooth $3$-dimensional $\CAT(-1)$ Riemannian manifold $X$ for which the coconvex topology $\mathcal T_{co}$ is not Hausdorff and not first-countable.
\end{ex}

In the final example, we  use some facts about geometry of spherical buildings arising as the boundary at infinity of symmetric space with their corresponding \emph{Tits-metric}, see  \cite{KleinerLeeb}, \cite{KleinerLeeb1}, \cite{Kap}.  The following result might be known to specialists, accordingly to Nicolas Monod it was known to Bruce Kleiner many years ago. 

\begin{prop}
Let $X$ be an irreducible, non-positively curved symmetric space of rank at least two. Then $X$ satisfies the assumptions, and, therefore, the conclusions of Lemma \ref{lem: anc}. 
\end{prop}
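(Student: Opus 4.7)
The plan is to verify the hypotheses of Lemma~\ref{lem: anc}. Since $X$ is a Riemannian manifold, it is locally compact, and being non-positively curved of rank at least $2$ it is unbounded. The nontrivial assertion is that for every closed convex proper subset $A\subsetneq X$, the Tits boundary $A^\infty$ is nowhere dense in $X^\infty$ with respect to the cone topology.

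Two structural inputs are used. First, by Kleiner--Leeb \cite{KleinerLeeb}, the Tits boundary $(X^\infty,\angle_T)$ of an irreducible symmetric space of rank $r\geq 2$ is a thick irreducible spherical building of rank $r$. Second, for any closed convex $A\subseteq X$, the subset $A^\infty\subseteq X^\infty$ is closed in the cone topology and $\pi$-convex with respect to the Tits metric. It therefore suffices to prove the rigidity statement that $A^\infty$ having nonempty cone-interior forces $A^\infty=X^\infty$. From this, the proposition follows by a short geodesic extension argument: if $A^\infty=X^\infty$ and $y\in X\setminus A$, pick $x\in A$ and extend $[x,y]$ past $y$ to a ray with endpoint $\xi\in X^\infty=A^\infty$; by convexity of $A$ together with $\xi\in A^\infty$, the unique ray from $x$ to $\xi$ lies in $A$ and contains $y$, contradicting $y\notin A$.

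I would prove the rigidity step using parallel sets. The thick, rank $r\geq 2$ irreducible building structure on $X^\infty$ implies that for every regular $\xi\in X^\infty$, the set of Tits-antipodes $\{\eta:\angle_T(\xi,\eta)=\pi\}$ is cone-dense in $X^\infty$; this is a standard consequence of the fact that opposite chambers form an open dense Bruhat stratum in the chamber space, together with the freedom in choosing basepoints for the cone topology. Consequently, if $A^\infty$ contains a cone-open set $U$, one finds a regular $\xi\in U$ and a Tits-antipode $\eta\in U\subseteq A^\infty$ of $\xi$. The geodesic line through $\xi,\eta$ then lies in $A$, and by the parallel-set rigidity of symmetric spaces $A$ actually contains the entire parallel set $P(\xi,\eta)$, which splits isometrically as $\R\times Y$ for a totally geodesic symmetric subspace $Y$. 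Varying the antipodal pair over a cone-open family inside $U\times U$ produces a cone-open family of parallel sets inside $A$; by irreducibility and the transitivity of $\Iso(X)^0$ on regular geodesic lines, this family cannot be contained in any proper totally geodesic subspace, so its union is all of $X$. Thus $A=X$, contradicting $A\subsetneq X$, and the desired nowhere-density follows.

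The main obstacle is the rigidity step above: rigorously extracting a Tits-antipodal pair from a cone-open subset of $A^\infty$, and then showing that the resulting family of parallel sets exhausts $X$. Both points genuinely use the irreducibility and the rank $\geq 2$ hypothesis through the combinatorics of the thick spherical building on $X^\infty$, as developed in \cite{KleinerLeeb,KleinerLeeb1,Kap}; in their absence (for instance in rank one, or in a reducible product) the density of Tits-antipodes in the cone topology breaks down and the argument fails, in keeping with the requirement that $X$ be irreducible of higher rank.
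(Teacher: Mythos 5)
Your reduction (local compactness, unboundedness, ``full boundary implies $A=X$'', and passing to a regular cone-open subset $O\subset A^{\infty}$) is fine, but the crux of your argument rests on a false statement: for a regular point $\xi\in X^{\infty}$ of a symmetric space of rank $r\geq 2$, the set of Tits-antipodes of $\xi$ is \emph{not} cone-dense in $X^{\infty}$. All antipodes of $\xi$ have type $I(\mathcal P(\xi))$, where $I$ is the opposition involution of the Coxeter chamber, so they lie in a single $G$-orbit; for regular type this orbit is a submanifold of dimension $n-r<n-1$ (it fibers over the Furstenberg boundary $G/P$), hence nowhere dense in the $(n-1)$-sphere $X^{\infty}$. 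The Bruhat-cell density you invoke only gives density of the \emph{opposite-chamber} condition inside the orbit $G\cdot\xi$, not density of antipodes in $X^{\infty}$; and a point in an opposite chamber of the same type as $\xi$ is an antipode only when $I$ is trivial. Worse, your plan to find an antipodal pair with \emph{both} points in a small cone-open $U\subset A^{\infty}$ is impossible in general: by continuity of the type map, all points of a small $U$ around $\xi$ have type close to $\mathcal P(\xi)$, so if $I(\mathcal P(\xi))\neq\mathcal P(\xi)$ (e.g.\ generic regular points in type $A_m$, such as $SL(3,\R)/SO(3)$), $U$ contains no antipodal pair at all. This is exactly the difficulty the paper's proof is built to circumvent: it finds $p'\in O\cap L^{op}_p$ inside the orbit $L=G\cdot p$, and when $I$ is nontrivial it produces the antipode-partner $q$ as the point of type $I(\mathcal P(p))$ on the Tits-geodesic from $p$ to $p'$, which lies in $A^{\infty}$ only because $A^{\infty}$ is $\pi$-convex for the Tits metric --- $q$ need not lie in $O$, and in your scheme it is simply missing. (Your closing remark is also backwards: in rank one every point of $X^{\infty}\setminus\{\xi\}$ is an antipode of $\xi$, so density holds there and fails in higher rank.)

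A secondary gap is the rigidity step after an antipodal pair is found. The paper does not pass through parallel sets in $X$ at all: it stays at infinity, observes that the Tits-convex hull of $q$ and a small Tits-ball around $p$ is an apartment contained in $A^{\infty}$, and then invokes the building-theoretic rigidity results (\cite[Theorem 1.1]{BL} to conclude $A^{\infty}$ is a sub-building, and \cite[Theorem 3.1]{KleinerLeeb} together with the top-dimensionality of $A^{\infty}$ to rule that out). Your alternative --- extracting a line in $A$ from $\xi,\eta\in A^{\infty}$, enlarging to parallel sets $\R\times Y$, and claiming a cone-open family of such parallel sets must exhaust $X$ by irreducibility --- is left at the level of a sketch: you would need to justify that $A$ actually contains a line joining the antipodes, and the exhaustion claim is not argued. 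So even granting an antipodal pair, the proposal does not yet yield a complete proof; as written, the argument has a genuine gap at its key step.
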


\begin{proof}
Assume the contrary and consider any closed convex subset $A$ of $X$ such that the boundary at infinity $A^{\infty}$  of $A$ has non-empty interior in the $(n-1)$-dimensional sphere 
$X^{\infty}$; here $n$ is the dimension of $X$.

Thus, in the cone topology, $A^{\infty}$ has dimension $n-1$. Therefore, there are no totally geodesic symmetric spaces $Y \subsetneq X$ 
with $A^{\infty} \subset Y^{\infty}$.  On the other hand, if $A^{\infty} =X^{\infty}$ then $A=X$. 
Thus, we may assume $A^{\infty} \neq X^{\infty}$.  Applying \cite[Theorem 3.1]{KleinerLeeb}, we deduce that $A^{\infty}$ is not a \emph{sub-building} of the \emph{spherical building} $X^{\infty}$.

Since $A^{\infty}$  contains an open subset in the cone topology, we find a non-empty subset $O$ of $A^{\infty}$, open in the cone topology and consisting of \emph{regular} points only.  If, for some 
$p\in O$, we find an antipode $q\in A^{\infty}$ (with respect to the \emph{Tits-distance}) then $A^{\infty}$  contains a spherical apartment  (the boundary of a maximal flat in $X$), as the convex hull in the Tits-metric of $q$ and a Tits-ball around $p$. By \cite[Theorem 1.1]{BL}, this would imply that $A^{\infty}$ is a sub-building,
 in contradiction to the statements above.   
Thus,  for no  $p\in O$ and $q\in A^{\infty}$ the Tits-distance between 
$p$ and $q$ equals $\pi$.

  We are going to construct a pair of antipodes $p\in O$ and $q\in A^{\infty}$ and achieve a contradiction.  We start with an arbitrary point $p\in O$.

Let $G$ be the isometry group of $X$ (and of  $X^{\infty}$) and denote 
 by $\Delta$ the spherical Coxeter chamber $X^{\infty} /G$  of the spherical building $X^{\infty}$.
 Let $\mathcal P:X^{\infty} \to \Delta$ be the canonical projection.  Denote by $I:\Delta \to \Delta$ the isometry of the Coxeter chamber induced by the action of $-Id$ on any apartment of $X^{\infty}$.   The map $I$ is an involution, which is the identity map if and only if the Coxeter group $W$ of $X^{\infty}$  has a non-trivial center (note, that this is the case for all Weyl groups, which are not of type $A_m$, $E_6$ or $D_{2m+1}$, see \cite[p.71]{Hum}).

Consider the orbit $L:=G\cdot p= \mathcal P^{-1} (\mathcal P (p)) \subset X^{\infty}$. Any element $p'\in L$ is contained in a unique Coxeter chamber $\Delta _{p'}$.  Consider the set $L^{op} _p$   of all elements $p'$ in $L$  which are 
in an \emph{opposite} Coxeter to $p$, thus such that the Coxeter chamber  $\Delta _{p'}$ through $p'$ contains an antipode of $p$.  Then $L^{op} _p$ is open and dense in the manifold $G\cdot p$, see, for instance, \cite{Kap}.   Thus, we find an element  $p'\in O\cap L^{op} _p$.

If the isometry $I:\Delta \to \Delta$ is the identity (see the discussion above), then $p'$ is an antipode of $p$ and we are done.  If $I$ is not the identity, then looking at an apartment through $p$ and $p'$ we deduce that the Tits-geodesic between $p$ and $p'$ contains a point $q$ which is projected by $\mathcal P$ onto $I(p)$.  Then  $L$ contains all antipodes of $q$. By convexity, $q\in A^{\infty}$. As above, the set 
 $L^{op} _q \cap O$ of elements in $O$  contained in a chamber opposite to $q$ is not empty.
 For any such element $p'\in L^{op} _q\cap O$, the distance between $q$ and $p'$ is $\pi$.
 
 Thus, in both cases we have found a pair of antipodes $p\in O$ and $q\in A^{\infty}$, finishing the proof. 
\end{proof}


	

\bibliographystyle{alpha}
\bibliography{Projection}

\begin{thebibliography}{AKP19b}

\bibitem[AKP19a]{AKP}
S.~Alexander, V.~Kapovitch, and A.~Petrunin.
\newblock {\em Alexandrov geometry: foundations}.
\newblock arXiv, 2019.

\bibitem[AKP19b]{AKP_inv}
S.~Alexander, V.~Kapovitch, and A.~Petrunin.
\newblock {\em An invitation to {A}lexandrov geometry}.
\newblock SpringerBriefs in Mathematics. 2019.

\bibitem[Anc94]{Ancona}
A.~Ancona.
\newblock Convexity at infinity and {B}rownian motion on manifolds with
  unbounded negative curvature.
\newblock {\em Rev. Mat. Iberoamericana}, 10(1):189--220, 1994.

\bibitem[And83]{Anderson}
M.~Anderson.
\newblock The {D}irichlet problem at infinity for manifolds of negative
  curvature.
\newblock {\em J. Differential Geom.}, 18(4):701--721 (1984), 1983.

\bibitem[Bac13]{Bac3}
M.~Bacak.
\newblock The proximal point algorithm in metric spaces.
\newblock {\em Israel J. Math.}, 194(2):689--701, 2013.

\bibitem[Bac14]{Bac4}
M.~Bacak.
\newblock {\em Convex analysis and optimization in {H}adamard spaces}.
\newblock De Gruyter, Berlin, 2014.

\bibitem[Bac18]{Bac}
M.~Bacak.
\newblock Old and new challenges in hadamard spaces.
\newblock {\em arXiv: 1807.013551}, 2018.

\bibitem[BDL17]{Darvas}
R.~Berman, T.~Darvas, and C.~Lu.
\newblock Convexity of the extended {K}-energy and the large time behavior of
  the weak {C}alabi flow.
\newblock {\em Geom. Topol.}, 21(5):2945--2988, 2017.

\bibitem[BH99]{BH}
M.~Bridson and A.~Haefliger.
\newblock {\em Metric spaces of non-positive curvature}.
\newblock Springer-Verlag, Berlin, 1999.

\bibitem[BL06]{BL}
A.~Balser and A.~Lytchak.
\newblock Building-like spaces.
\newblock {\em J. Math. Kyoto Univ.}, 46(4):789--804, 2006.

\bibitem[Bor92]{Borbely}
A.~Borb\'{e}ly.
\newblock A note on the {D}irichlet problem at infinity for manifolds of
  negative curvature.
\newblock {\em Proc. Amer. Math. Soc.}, 114(3):865--872, 1992.

\bibitem[Cla13]{Clarke}
B.~Clarke.
\newblock The completion of the manifold of {R}iemannian metrics.
\newblock {\em J. Differential Geom.}, 93(2):203--268, 2013.

\bibitem[CR13]{Clarke2}
B.~Clarke and Y.~Rubinstein.
\newblock Conformal deformations of the {E}bin metric and a generalized
  {C}alabi metric on the space of {R}iemannian metrics.
\newblock {\em Ann. Inst. H. Poincar\'{e} Anal. Non Lin\'{e}aire},
  30(2):251--274, 2013.

\bibitem[DST16]{Obs}
G.~Devillanova, S.~Solimini, and C.~Tintarev.
\newblock On weak convergence in metric spaces.
\newblock In {\em Nonlinear analysis and optimization}, volume 659 of {\em
  Contemp. Math.}, pages 43--63. Amer. Math. Soc., Providence, RI, 2016.

\bibitem[Eng89]{Engel}
R.~Engelking.
\newblock {\em General topology}, volume~6 of {\em Sigma Series in Pure
  Mathematics}.
\newblock Heldermann Verlag, Berlin, 1989.

\bibitem[GN20]{Gigli}
N.~Gigli and F.~Nobili.
\newblock A differential perspective on {Gradient Flows} on
  {CAT}($\kappa$)-spaces and applications.
\newblock {\em arXiv: 2012.20952}, 2020.

\bibitem[Hum72]{Hum}
J.~Humphreys.
\newblock {\em Introduction to {L}ie algebras and representation theory}.
\newblock Graduate Texts in Mathematics, Vol. 9. Springer-Verlag, 1972.

\bibitem[Jos94]{Jost}
J.~Jost.
\newblock Equilibrium maps between metric spaces.
\newblock {\em Calc. Var. Partial Differential Equations}, 2(2):173--204, 1994.

\bibitem[Kel14]{Kell}
M.~Kell.
\newblock Uniformly convex metric spaces.
\newblock {\em Anal. Geom. Metr. Spaces}, 2(1):359--380, 2014.

\bibitem[KL97]{KleinerLeeb1}
B.~Kleiner and B.~Leeb.
\newblock Rigidity of quasi-isometries for symmetric spaces and {E}uclidean
  buildings.
\newblock {\em Inst. Hautes \'{E}tudes Sci. Publ. Math.}, 86:115--197 (1998),
  1997.

\bibitem[KL06]{KleinerLeeb}
B.~Kleiner and B.~Leeb.
\newblock Rigidity of invariant convex sets in symmetric spaces.
\newblock {\em Invent. Math.}, 163(3):657--676, 2006.

\bibitem[KLP18]{Kap}
M.~Kapovich, B.~Leeb, and J.~Porti.
\newblock Dynamics on flag manifolds: domains of proper discontinuity and
  cocompactness.
\newblock {\em Geom. Topol.}, 22(1):157--234, 2018.

\bibitem[KP08]{Kirk}
W.~A. Kirk and B.~Panyanak.
\newblock A concept of convergence in geodesic spaces.
\newblock {\em Nonlinear Anal.}, 68(12):3689--3696, 2008.

\bibitem[LN19]{LN1}
A.~Lytchak and K.~Nagano.
\newblock Geodesically complete spaces with an upper curvature bound.
\newblock {\em Geom. Funct. Anal.}, 29:295--342, 2019.

\bibitem[Mon06]{Monod}
N.~Monod.
\newblock Superrigidity for irreducible lattices and geometric splitting.
\newblock {\em J. Amer. Math. Soc.}, 19(4):781--814, 2006.

\bibitem[Sch19]{Schwer}
P.~Schwer.
\newblock Lecture notes on {CAT(0)} cube complexes.
\newblock {\em https://arxiv.org/pdf/1910.06815}, 2019.

\bibitem[Str16]{Streets}
J.~Streets.
\newblock The consistency and convergence of {$K$}-energy minimizing movements.
\newblock {\em Trans. Amer. Math. Soc.}, 368(7):5075--5091, 2016.

\bibitem[Whi67]{Eberlein}
R.~Whitley.
\newblock An elementary proof of the {E}berlein-\v{S}mulian theorem.
\newblock {\em Math. Ann.}, 172:116--118, 1967.

\end{thebibliography}

\end{document}